\newcommand{\Ein}{\text{\rm Ein}}
\newtheorem{theorem}{Theorem}
\newtheorem*{theorem*}{Theorem}
\newtheorem{lemma}{Lemma}
\newtheorem{corollary}{Corollary}
\newtheorem*{remark*}{Remark}
\begin{document}
\title{\uppercase{A Family of Multiple Integrals Connected with Relatives of the Dickman Function}}
\author{C. S. Franze \thanks{franze.3@osu.edu}}
\affil{Department of Mathematics,\\ The Ohio State University}
\date{}
\maketitle
\abstract{In this paper we present an asymptotic expansion for a family of multiple integrals connected with relatives of the Dickman function. The coefficients of this expansion have a similar arithmetic structure as those appearing in Soundararajan's work on an analogous expansion for the Dickman function.}
\section{Introduction}
Let $d_\kappa(n)$ be the number of representations of $n$ as a product of exactly $\kappa$ positive integers, and $P(n)$ denote the greatest prime factor of $n$. Setting $u=\log x/\log y$, de Bruijn and Van Lint \cite{DeBruijn} showed that as $x,y\rightarrow\infty$ with $u$ bounded,
\[
    \sum_{\substack{n\le x\\P(n)\le y}}d_\kappa(n)\sim \rho_\kappa(u)\ x\log^{\kappa-1}y,
\]
where $\rho_\kappa(u)=u^{\kappa-1}/(\kappa-1)!$ when $0<u\le1$, $\rho_\kappa(u)=0$ when $u\le0$, and
\begin{equation}\label{E:altdiffeq}
  \left(u^{1-\kappa} \rho_{\kappa}(u)\right)'=-\kappa u^{-\kappa}\rho_{\kappa}(u-1),\quad u>1.
\end{equation}
The function $\rho_{\kappa}(u)$ is well-understood, and has been studied by many authors (e.g. see \cite{Hildebrand} and \cite{Smida}). For example, it is possible to write $\rho_{\kappa}(u)$ as a sum of multiple integrals, which can then be computed numerically. 
\begin{lemma}\label{decomplemma}
If $\rho_{\kappa}(u)$ is defined as above in \eqref{E:altdiffeq}, then we may write
\begin{equation*}
    \rho_\kappa(u)=\sum_{\ell=0}^{\infty}\frac{(-\kappa)^\ell}{(\kappa-1)!}K_\ell(u,\kappa-1),
\end{equation*}
where $K_{\ell}(u,\kappa):=0$ when $\ell\ge u$, $K_{0}(u,\kappa):=u^{\kappa}$, and, for $\ell\ge1$,
  \begin{equation}\label{E:multK2}
  K_{\ell}(u,\kappa):=\frac{1}{\ell!}\idotsint\limits_{\substack{t_{1},\ldots,t_{\ell}\ge1\\t_{1}+\cdots+t_{\ell}\le u}}\left(u-(t_{1}+\cdots+t_{\ell})\right)^{\kappa}\frac{dt_{1}}{t_{1}}\cdots\frac{dt_{\ell}}{t_{\ell}}.
\end{equation}
\end{lemma}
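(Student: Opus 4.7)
The plan is to show that $S(u):=\sum_{\ell\ge 0}\tfrac{(-\kappa)^\ell}{(\kappa-1)!}K_\ell(u,\kappa-1)$ coincides with $\rho_\kappa(u)$ by verifying that $S$ obeys the same initial data on $(0,1]$ and the same delay equation \eqref{E:altdiffeq}. Uniqueness for \eqref{E:altdiffeq} (a routine induction on the intervals $(n,n+1]$) then forces $S\equiv\rho_\kappa$. Note that the series is really a finite sum for each fixed $u$, since $K_\ell(u,\kappa-1)=0$ whenever $\ell>u$, so convergence is automatic.

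For $0<u\le 1$ and $\ell\ge 1$, the constraints $t_i\ge 1$ and $t_1+\cdots+t_\ell\le u\le 1$ force the integration domain in \eqref{E:multK2} onto a null set, so $K_\ell(u,\kappa-1)=0$ and $S(u)=u^{\kappa-1}/(\kappa-1)!=\rho_\kappa(u)$. For the differential equation, differentiating term by term and re-indexing reduces $(u^{1-\kappa}S(u))'=-\kappa u^{-\kappa}S(u-1)$ to the single identity
\begin{equation*}
  \bigl(u^{1-\kappa}K_\ell(u,\kappa-1)\bigr)'=u^{-\kappa}K_{\ell-1}(u-1,\kappa-1),\qquad \ell\ge 1.
\end{equation*}

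To prove this identity I would perform the rescaling $t_i=u\tau_i$ in \eqref{E:multK2}, which transfers the entire $u$-dependence into the lower bounds:
\begin{equation*}
  u^{1-\kappa}K_\ell(u,\kappa-1)=\frac{1}{\ell!}\idotsint\limits_{\substack{\tau_i\ge 1/u\\ \tau_1+\cdots+\tau_\ell\le 1}}\left(1-\tau_1-\cdots-\tau_\ell\right)^{\kappa-1}\frac{d\tau_1}{\tau_1}\cdots\frac{d\tau_\ell}{\tau_\ell}.
\end{equation*}
Differentiating in $u$ via the Leibniz rule produces, from each indicator $\mathbf{1}_{\tau_j\ge 1/u}$, a boundary contribution $\delta(\tau_j-1/u)/u^2$; evaluating the delta (which supplies a factor $1/\tau_j=u$), using symmetry to replace the sum over $j$ by $\ell$ times a single term, and finally undoing the rescaling yields exactly $u^{-\kappa}K_{\ell-1}(u-1,\kappa-1)$. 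The main place where care is required is the bookkeeping in this last step---tracking the Jacobian $1/u^2$, the factor $u$ from the delta, the $u^{1-\kappa}$ that reappears when pulling $(1-1/u-\sum\tau_i)^{\kappa-1}$ back to the $t_i$ coordinates, and the normalization $1/\ell!\to 1/(\ell-1)!$ coming from symmetry---but a virtue of this approach is that the calculation is uniform in $\kappa\ge 1$, avoiding any apparent case split at $\kappa=1$ where the original integrand $(u-\sum t_i)^{\kappa-1}$ no longer vanishes on the boundary $\sum t_i=u$.
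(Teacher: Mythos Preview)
Your argument is correct and reaches the same conclusion as the paper, but by a different path. Both proofs verify the delay identity
\[
  \bigl(u^{1-\kappa}K_\ell(u,\kappa-1)\bigr)'=u^{-\kappa}K_{\ell-1}(u-1,\kappa-1)
\]
and then appeal (implicitly) to uniqueness for \eqref{E:altdiffeq}. The paper, however, does not prove this identity directly from the symmetric multiple integral \eqref{E:multK2}. Instead it first records Wheeler's iterated representation \eqref{E:FirstKrep}, for which the identity is immediate from the Fundamental Theorem of Calculus, and then invests its effort in the change of variables \eqref{E:Change_of_Variables} to show that \eqref{E:FirstKrep} and \eqref{E:multK2} define the same $K_\ell(u,\kappa)$. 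Your rescaling $t_i=u\tau_i$ bypasses Wheeler's form entirely: it pushes all $u$-dependence into the lower limits $\tau_i\ge 1/u$, so the Leibniz rule (your ``$\delta$'' bookkeeping) produces the identity in one stroke and, as you note, uniformly in $\kappa$. This is more direct for the purpose of Lemma~\ref{decomplemma}. What the paper's route buys in return is the equivalence of the symmetric and iterated forms of $K_\ell$, which is of independent interest and is in the background of later manipulations such as \eqref{Kconvolvidentity}.
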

The case $\kappa=1$ corresponds to Dickman's function, but appears even earlier in Ramanujan's unpublished papers. For general $\kappa$, one can deduce Lemma \ref{decomplemma} from work of Wheeler \cite[p.498]{Wheeler2}. A proof will be supplied shortly.\\
Though there are many numerical methods for computing $K_\ell(u,\kappa)$ [e.g. see \cite{Franze}, \cite{Grupp}, \cite{Wheeler}], the asymptotic nature of $K_{\ell}(u,\kappa)$ has not been fully explored. This is somewhat surprising given also that asymptotic expansions for $\rho_{\kappa}(u)$ are available. Recently, however, an asymptotic formula for $K_{\ell}(u):=K_{\ell}(u,0)$ was developed by Soundararajan \cite[see Theorem 1, and Propositon 1]{Sound}, in connection with Lemma \ref{decomplemma}.
\begin{theorem}[Soundararajan, 2012]\label{ST1}
For each $\ell\ge1$, provided $u$ is sufficiently large,
\begin{equation*}
  K_{\ell}(u)=\sum_{r=0}^{\ell}\frac{(-1)^r}{(\ell-r)!}C_r \log^{\ell-r}u+O_{\ell}\left(\frac{\log^{\ell}u}{u}\right),
\end{equation*}
where the constants $C_r$ are generated by
\begin{equation}\label{E:Soundgen}
  \sum_{r=0}^{\infty}C_{r}z^r=\frac{e^{\gamma z}}{\Gamma(1-z)}.
\end{equation}
\end{theorem}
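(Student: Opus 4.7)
The plan is to proceed by Laplace inversion. A direct calculation from the definition of $K_\ell(u) = K_\ell(u,0)$, swapping the order of integration, gives
\[
    \widehat{K}_\ell(s) := \int_0^\infty K_\ell(u) e^{-su}\,du = \frac{E_1(s)^\ell}{\ell!\, s},\qquad \mathrm{Re}(s) > 0,
\]
where $E_1(s) = \int_1^\infty e^{-st}/t\,dt$. The identity $E_1(s) = -\gamma - \log s + \Ein(s)$, with $\Ein(s) = \int_0^s (1-e^{-t})/t\,dt$ entire and vanishing at $s=0$, exhibits the only singularity of $\widehat{K}_\ell(s)$ as a logarithmic branch point at the origin, and it is this singularity that controls the asymptotics of $K_\ell(u)$ for large $u$.

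Next, applying Bromwich's formula and deforming the vertical contour $\mathrm{Re}(s) = c > 0$ to a Hankel contour $\mathcal H$ wrapping the branch cut $(-\infty,0]$ (permissible since $|E_1(s)| \sim |e^{-s}|/|s|$ as $|s| \to \infty$ with $|\arg s| < \pi$, giving decay on the deformed contour whenever $u > \ell$), then substituting $s = v/u$, produces
\[
    K_\ell(u) = \frac{1}{2\pi i\,\ell!}\int_{\mathcal H} \frac{e^v}{v}\bigl(\log u - \log v - \gamma + \Ein(v/u)\bigr)^\ell\,dv.
\]
Expanding the $\ell$th power by the binomial theorem splits the integrand into pieces containing a factor $\Ein(v/u)$ (contributing only to the error) and pieces with no such factor (producing the main term).

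For the main term I invoke Hankel's representation
\[
    \frac{1}{\Gamma(1+z)} = \frac{1}{2\pi i}\int_{\mathcal H}\frac{e^v}{v^{1+z}}\,dv,
\]
and differentiate $k$ times in $z$ at $z=0$ to obtain
\[
    \frac{1}{2\pi i}\int_{\mathcal H}\frac{e^v(-\log v - \gamma)^k}{v}\,dv = k!\,(-1)^k\,C_k,
\]
the right-hand side being identified by matching Taylor coefficients of $e^{-\gamma z}/\Gamma(1+z)$ against \eqref{E:Soundgen} under $z \mapsto -z$. Combining with the binomial expansion,
\[
    \frac{1}{\ell!}\sum_{k=0}^\ell \binom{\ell}{k}(\log u)^{\ell-k}\cdot k!\,(-1)^k\,C_k = \sum_{r=0}^\ell \frac{(-1)^r}{(\ell-r)!}\,C_r\,\log^{\ell-r} u,
\]
precisely the claimed main term.

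The principal obstacle is bounding the error contribution from those binomial pieces containing $\Ein(v/u)^j$ with $j \ge 1$. Near the origin, $\Ein(v/u) = v/u + O(v^2/u^2)$ provides the desired $u^{-1}$ savings; however, $\Ein(v/u)$ grows (eventually exponentially in $|v|/u$) on the tails of $\mathcal H$. I would split $\mathcal H$ at $|v| \asymp u$: the bounded portion is handled by Taylor expansion of $\Ein$, and the tails by the exponential decay of $e^v$, which dominates the growth of $\Ein(v/u)^j$ once $u > \ell$. The resulting error is $O_\ell(\log^{\ell-1} u / u)$, comfortably absorbed into the claimed $O_\ell(\log^\ell u / u)$.
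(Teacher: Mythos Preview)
Your argument is correct and follows the same skeleton as the paper's proof of its Theorem~\ref{T_1} specialized to $\kappa=0$: Laplace inversion to reach $E_1(s)^\ell/(\ell!\,s)$, the identity $E_1(s)=-\gamma-\log s+\Ein(s)$, the rescaling $s\mapsto s/u$, a binomial expansion separating the $\Ein$-free main terms from the error, and identification of the coefficients through Hankel's integral for $1/\Gamma$. The one substantive difference is the contour. You deform to a Hankel loop around $(-\infty,0]$ at the outset, which is legitimate for $u>\ell$ because $e^{us}E_1(s)^\ell\sim e^{(u-\ell)s}/s^\ell$ decays on arcs in the left half-plane; this gives absolute convergence and lets you bound the error pieces directly by splitting the loop at $|v|\asymp u$. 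The paper instead keeps the vertical line $\Re s=c$ throughout, so its error integrals are only conditionally convergent; it must first integrate by parts (its Lemmas~\ref{Error1} and~\ref{Error2}) to gain an extra factor of $|s|^{-1}$ before splitting at $|s|=u$ and invoking the elementary bounds on $G(u,s)=\Ein(s/u)$ and its derivative. Your route is a little cleaner for $\kappa=0$; the paper's vertical-line setup, on the other hand, interfaces more naturally with the definition~\eqref{E1_begin} of the generalized constants $C_{r,\kappa}$ needed for $\kappa\ge1$.
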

The formula was conjectured by Broadhurst \cite{Broadhurst} in the course of his investigations into a generalized class of polylogarithms. In this paper, we generalize Theorem \ref{ST1} to all integer $\kappa\ge 0$. Our formula will be uniform in $u\ge\ell$, where $\ell\ge 1$. Specifically, we prove
\begin{theorem}\label{T_1}
For each integer $\kappa\ge0$, and $\ell\ge1$, provided $u\ge\ell$,
\begin{equation}\label{KMain}
K_{\ell}(u,\kappa)=\sum_{m=0}^{\min\left(\kappa,\ell\right)}\sum_{n=m}^{\kappa}\sum_{r=0}^{\ell-m} D_{m,n,r,\kappa,\ell}\ u^{\kappa-n}\log^{\ell-m-r}u+O_{\kappa,\ell}\left(\frac{\log^{\ell} eu}{u}\right),
\end{equation}
where
\begin{equation}\label{DDef}
 D_{m,n,r,\kappa,\ell}=\frac{(-1)^r \Gamma(\kappa+1)E_{n,m}}{m!(\ell-m-r)!}C_{r,\kappa-n},
\end{equation}
and the constants $C_{r,\kappa}$ and $E_{n,m}$ are generated by
\begin{equation}\label{CDef}
  \sum_{r=0}^{\infty}C_{r,\kappa}\ z^{r}=\frac{e^{\gamma z}}{\Gamma\left(\kappa+1-z\right)},
\end{equation}
and
\begin{equation}\label{EDef}
  \sum_{n=0}^{\infty}E_{n,m}\ z^{n}=\Ein(z)^m.
\end{equation}
\end{theorem}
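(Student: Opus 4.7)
The plan is to adapt and extend Soundararajan's Laplace-transform argument from the $\kappa=0$ case to general $\kappa$. A direct computation using Fubini together with the substitution $v = u - (t_1+\cdots+t_\ell)$ inside the integral defining $K_\ell$ gives
\[
\int_0^\infty e^{-su} K_\ell(u,\kappa)\,du = \frac{\Gamma(\kappa+1)}{\ell!\, s^{\kappa+1}}\left(\int_1^\infty \frac{e^{-st}}{t}\,dt\right)^\ell = \frac{\Gamma(\kappa+1)}{\ell!\, s^{\kappa+1}}\,E_1(s)^\ell
\]
for $\operatorname{Re}(s)>0$, where $E_1(s) = -\gamma - \log s + \Ein(s)$. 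Inverting by Bromwich and deforming to a Hankel contour $H$ around the branch cut $(-\infty,0]$ (justified by analyticity of $E_1$ off the cut and its decay at infinity), followed by the rescaling $s=w/u$, yields
\[
K_\ell(u,\kappa) = \frac{\Gamma(\kappa+1)\,u^\kappa}{2\pi i\,\ell!}\int_H \frac{e^w}{w^{\kappa+1}}\bigl(\log u -\gamma - \log w + \Ein(w/u)\bigr)^\ell\, dw,
\]
which concentrates the $u$-dependence into $\log u$ and the entire factor $\Ein(w/u)$.

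Now I would expand the $\ell$-th power by the binomial theorem, separating out $\Ein(w/u)^m$, and substitute the Taylor series $\Ein(w/u)^m = \sum_{n\ge m} E_{n,m}(w/u)^n$ from \eqref{EDef}, truncating at $n=\kappa$. For each resulting term the contour integral is evaluated by the classical identity
\[
\frac{1}{2\pi i}\int_H e^w w^{-z} \log^r w\,dw = (-1)^r\left(\frac{1}{\Gamma}\right)^{(r)}(z),
\]
obtained by $r$-fold differentiation of Hankel's formula $1/\Gamma(z) = (2\pi i)^{-1}\int_H e^w w^{-z}\,dw$. After expanding $(\log u - \gamma)^{\ell-m-r}$ binomially and reindexing, the inner sum over the $\gamma$-powers collapses via Leibniz to
\[
\frac{d^q}{dz^q}\bigg|_{z=0}\frac{e^{-\gamma z}}{\Gamma(\kappa+1-n+z)} = (-1)^q q!\, C_{q,\kappa-n},
\]
where the last equality comes from substituting $z\mapsto -z$ in the defining generating function \eqref{CDef}. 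Matching the coefficients of $u^{\kappa-n}\log^{\ell-m-r}u$ recovers exactly the claimed $D_{m,n,r,\kappa,\ell}$ from \eqref{DDef}; the restriction $m\le\min(\kappa,\ell)$ appears because $E_{n,m}$ vanishes for $n<m$ while $n$ is capped at $\kappa$ for the main terms.

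The remaining error comes from the tail $n>\kappa$ of the Taylor expansion of $\Ein(w/u)^m$, together with the contribution from any small-radius arc of $H$. I would bound these by taking the circular part of $H$ of fixed radius and estimating the integrand absolutely; each such piece contributes at worst $u^{\kappa-n}\log^{\ell-m}u$ with $n\ge\kappa+1$, giving the overall $O_{\kappa,\ell}(\log^\ell(eu)/u)$ error. The hardest part will be executing this bound uniformly in $u\ge\ell$: the Hankel-contour estimates naturally give $O(\log^\ell u/u)$ for large $u$, but one must work slightly harder to produce the factor $\log(eu)$ (which handles the regime $u$ close to $\ell$) and to keep the $\ell$-dependence of implicit constants under control while truncating the Taylor series with an explicit remainder. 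The Laplace-transform-plus-Hankel machinery itself is fairly mechanical once the generating-function identity above is recognized, so the principal effort lies in this uniform error analysis.
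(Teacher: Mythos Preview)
Your overall architecture---Laplace transform, binomial expansion splitting off $\Ein(w/u)^m$, Taylor truncation at $n=\kappa$, and identification of the constants via a Hankel-type formula---matches the paper's proof closely. The main-term bookkeeping you describe (Hankel derivatives followed by a Leibniz collapse of the $\gamma$-powers) is correct, though more roundabout than necessary: the paper groups the integrand as $(\log u-(\log s+\gamma))^{\ell-m}$, expands once, and recognises $\frac{1}{2\pi i}\int_{(c)}e^{s}(\log s+\gamma)^{r}s^{-(\kappa-n+1)}ds = r!\,C_{r,\kappa-n}$ directly from the integral definition of $C_{r,\kappa}$, avoiding the second binomial expansion and the Leibniz step.

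The real divergence from the paper, and the place where your proposal has a gap, is the decision to deform to a Hankel contour \emph{before} doing any expansion. Your justification ``analyticity of $E_1$ off the cut and its decay at infinity'' is not correct: $E_1(s)=-\gamma-\log s+\Ein(s)$ does \emph{not} decay in the left half-plane; along the negative real axis $\Ein(-x)\sim e^{x}/x$ grows exponentially. The deformation can in fact be saved for $u>\ell$ because $|e^{us}|=e^{-u|\Re s|}$ beats $|E_1(s)|^\ell\lesssim e^{\ell|\Re s|}$ on the rays, but the arc estimates connecting the Bromwich line to the Hankel contour are genuinely delicate (note the margin vanishes as $u\downarrow\ell$), and none of this is covered by what you wrote. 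The paper sidesteps the whole issue by never leaving the vertical line $\Re s=1$: it expands there, and then the error terms (your ``hardest part'') are handled by a single integration by parts against $e^{s}$, after which the absolute bounds on $G$, $G'$, $R_m$, $R_m'$ over $|s|\lessgtr u$ give the $O_{\kappa,\ell}((1+\log u)^{\ell}/u)$ error directly and uniformly in $u\ge\ell$. The Hankel contour appears only inside the evaluation of the already-isolated constants $C_{r,\kappa-n}$. If you want to push your Hankel-first route through, you will need to supply the arc estimates and then redo the remainder bounds on a contour where $\Ein(w/u)$ can be exponentially large; the vertical-line-plus-integration-by-parts device is both the paper's approach and the cleaner one.
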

The function $\Ein(z)$ appearing above is defined by an integral,
\begin{equation*}
   \Ein(z):=\int_{0}^{z}\frac{1-e^{-t}}{t}dt.
\end{equation*}
Using this representation, it is not hard to show that the coefficients $E_{n,m}$ appearing in \eqref{EDef} satisfy
\begin{equation}\label{Ealtexp}
  n!E_{n,m}=\sum_{\substack{n_1,\ldots,n_m\ge1\\n_1+\cdots+n_m=n}}\frac{(-1)^{n+m}}{n_1\cdots n_m}\binom{n}{n_1,\ldots,n_m},
\end{equation}
where $E_{n,m}=0$ if $0\le n\le m-1$, and $m\ge1$. If $m=0$, $E_{0,0}=1$ and $E_{n,0}=0$ for $n>0$.
\section{An Integral Decomposition}
We now turn our attention to the proof of Lemma \ref{decomplemma}. As remarked earlier, it can be deduced from more general work of Wheeler. As a special case of that work, we have Lemma 2 below. The proof is short and so we take the opportunity to reproduce it here.
\begin{lemma}[Wheeler, 1990]\label{WheelerSP}
If $\rho_{\kappa}(u)$ is defined as above in \eqref{E:altdiffeq}, then we may write
\begin{equation}\label{E:rhorep1}
  \rho_{\kappa}(u)=\sum_{0\le \ell<u}\frac{(-\kappa)^{\ell}}{(\kappa-1)!}K_\ell(u,\kappa-1),
\end{equation}
where $K_{\ell}(u,\kappa):=0$ when $\ell\ge u$, $K_{\ell}(u,\kappa)=u^{\kappa}$, and for $\ell\ge1$,
\begin{equation}\label{E:FirstKrep}
  K_{\ell}(u,\kappa)=u^{\kappa}\int_{\ell}^{u}t^{-\kappa-1}K_{\ell-1}(t-1,\kappa)dt.
\end{equation}
\end{lemma}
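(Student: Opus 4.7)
The plan is to prove Lemma~\ref{WheelerSP} by induction on $L := \lfloor u \rfloor$, showing that the right-hand side of \eqref{E:rhorep1} matches $\rho_\kappa(u)$ on each interval $[L, L+1)$. The base case $L=0$ is immediate: for $0 < u \le 1$, the vanishing convention $K_\ell(u,\kappa-1)=0$ for $\ell \ge u$ kills every term except $\ell=0$, which gives $K_0(u,\kappa-1)/(\kappa-1)! = u^{\kappa-1}/(\kappa-1)!$, matching the initial data for $\rho_\kappa$.

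For the inductive step, assume \eqref{E:rhorep1} holds for all $u' < L$ and take $u \in [L, L+1)$. Integrating \eqref{E:altdiffeq} from $L$ to $u$ yields
\[
\rho_\kappa(u) = u^{\kappa-1}L^{1-\kappa}\rho_\kappa(L) - \kappa\, u^{\kappa-1}\int_L^u t^{-\kappa}\rho_\kappa(t-1)\,dt.
\]
Since $t-1 \in [L-1, L)$ throughout the integrand, the inductive hypothesis allows me to replace $\rho_\kappa(t-1)$ by $\sum_{\ell=0}^{L-1}\frac{(-\kappa)^\ell}{(\kappa-1)!}K_\ell(t-1,\kappa-1)$. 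Splitting each resulting integral as $\int_L^u = \int_{\ell+1}^u - \int_{\ell+1}^L$ and invoking the recursion \eqref{E:FirstKrep}, the ``upper'' halves assemble (after reindexing $m = \ell+1$) into $\sum_{m=1}^{L}\frac{(-\kappa)^m}{(\kappa-1)!}K_m(u,\kappa-1)$, which is everything in the target sum except the $m=0$ term.

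It remains to verify that the leftover $u^{\kappa-1}L^{1-\kappa}\rho_\kappa(L)$, together with the ``lower'' correction integrals $u^{\kappa-1}\int_{\ell+1}^L t^{-\kappa}K_\ell(t-1,\kappa-1)\,dt$, combines to give precisely $u^{\kappa-1}/(\kappa-1)! = K_0(u,\kappa-1)/(\kappa-1)!$. After dividing through by $u^{\kappa-1}L^{1-\kappa}$ and recognizing $L^{\kappa-1}\int_{\ell+1}^L t^{-\kappa}K_\ell(t-1,\kappa-1)\,dt = K_{\ell+1}(L,\kappa-1)$ via \eqref{E:FirstKrep}, this reduces to the identity
\[
\rho_\kappa(L) = \sum_{m=0}^{L}\frac{(-\kappa)^m}{(\kappa-1)!}K_m(L,\kappa-1),
\]
which is precisely the inductive hypothesis applied as $u' \to L^-$ (using continuity of $\rho_\kappa$ at the integer $L$, itself a consequence of the integrated form of \eqref{E:altdiffeq}) together with the convention $K_L(L,\kappa-1) = 0$. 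The main obstacle is purely the bookkeeping: tracking the shift $\ell \mapsto m = \ell+1$ and aligning the three integration limits $\ell+1$, $L$, and $u$ so that \eqref{E:FirstKrep} can be cleanly applied. Once that is navigated, no deeper difficulty enters, and the closing endpoint identity is the induction itself.
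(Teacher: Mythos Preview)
Your proof is correct, but it takes a different route from the paper's. The paper proceeds by direct verification of the delay differential equation: differentiating \eqref{E:FirstKrep} gives the identity $\bigl(u^{-\kappa}K_{\ell}(u,\kappa)\bigr)'=u^{-\kappa-1}K_{\ell-1}(u-1,\kappa)$, and applying this termwise to the right-hand side of \eqref{E:rhorep1} immediately shows that it satisfies \eqref{E:altdiffeq}; since the initial data on $(0,1]$ also match, equality follows (implicitly by uniqueness for the DDE). Your argument is the integrated, inductive version of the same calculation: instead of differentiating the candidate and matching the DDE, you integrate the DDE on $[L,u]$ and reconstruct the candidate interval by interval, invoking \eqref{E:FirstKrep} in its original integral form rather than its differentiated form. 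The paper's approach is considerably shorter and avoids all the splitting and reindexing, but yours is more self-contained in that it does not appeal (even tacitly) to a uniqueness principle for delay equations---you are in effect proving that uniqueness by the standard step-by-step method. Both rest on the same recursive definition \eqref{E:FirstKrep}; the difference is purely organizational.
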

\begin{proof}
We must show that the expression on the right-hand side of \eqref{E:rhorep1} satisfies the same delay differential equation as $\rho_{\kappa}(u)$, namely \eqref{E:altdiffeq}. To this end, note that from the definition of $K_\ell(u,\kappa)$ in \eqref{E:FirstKrep},
\begin{equation}\label{UsefulKdiff}
  \left(u^{-\kappa}K_{\ell}(u,\kappa)\right)'=u^{-\kappa-1}K_{\ell-1}(u-1,\kappa).
\end{equation}
Recalling the right-hand side of \eqref{E:rhorep1} and using \eqref{UsefulKdiff} then gives
\begin{align*}
  \left(u^{1-\kappa}\sum_{0\le\ell<u}\frac{(-\kappa)^{\ell}}{(\kappa-1)!}K_{\ell}(u,\kappa-1)\right)'
    &=-\kappa u^{-\kappa}\sum_{1\le\ell<u}\frac{(-\kappa)^{\ell-1}}{(\kappa-1)!}K_{\ell-1}(u-1,\kappa-1).
\end{align*}
Re-indexing this last sum to $0\le\ell<u-1$ then completes the proof. 
\end{proof}
To deduce Lemma \ref{decomplemma} from Lemma \ref{WheelerSP}, we must show that the integrals arising from \eqref{E:FirstKrep} are equivalent to those appearing in \eqref{E:multK2}. Iterating the integral relation in \eqref{E:FirstKrep}, we find that
\begin{equation}\label{E:multK1}
  K_{\ell}(u,\kappa)=u^\kappa\int_{\ell}^{u}\int_{\ell-1}^{t_{\ell}-1}\cdots\int_{1}^{t_2-1}\prod_{i=1}^{\ell}\left(t_i-1\right)^\kappa\frac{dt_1}{t_1^{\kappa+1}}\cdots\frac{dt_{\ell-1}}{t_{\ell-1}^{\kappa+1}}\frac{dt_\ell}{t_\ell^{\kappa+1}}.
\end{equation}
A change of variables is now required to show that \eqref{E:multK1} and \eqref{E:multK2} are equivalent. The details are outlined below.
\begin{proof}[Proof of Lemma 1]
Observe that upon ordering the variables, the right-hand side of \eqref{E:multK2} becomes
\begin{equation*}
  \idotsint\limits_{\substack{1\le t_\ell\le\cdots\le t_1\\t_1+\cdots+t_\ell\le u}}\left(u-(t_1+\cdots+t_\ell)\right)^{\kappa}\frac{dt_1}{t_1}\cdots\frac{dt_\ell}{t_\ell},
\end{equation*}
or equivalently,
\begin{equation}\label{E:OrderedInt2} 
  \int_{1}^{\frac{u}{\ell}}\int_{t_\ell}^{\frac{u-t_\ell}{\ell-1}}\cdots\int_{t_2}^{\frac{u-(t_2+\cdots+t_\ell)}{1}}\left(u-\sum_{i=1}^{\ell} t_i\right)^\kappa \frac{dt_{1}}{t_{1}}\cdots\frac{dt_{\ell-1}}{t_{\ell-1}}\frac{dt_\ell}{t_\ell}.
\end{equation}
Next, we make the change of variables
\begin{equation}\label{E:Change_of_Variables}
  t_i=\frac{u}{v_i}\prod_{j=i+1}^{\ell}\left(1-\frac{1}{v_j}\right), \quad\text{for $1\le i\le \ell$},
\end{equation}
adopting the convention that the empty product is one. One can verify that
\begin{equation}\label{E:New_Integrand}
  \prod_{i=1}^{\ell}\left(1-\frac{1}{v_i}\right)=\frac{1}{u}\left(u-\sum_{i=1}^{\ell}t_i\right),
\end{equation}
and that the limits of integration appearing in \eqref{E:OrderedInt2} translate to those in \eqref{E:multK1}. Moreover, the Jacobian of the transformation can easily be computed using \eqref{E:Change_of_Variables},
\begin{equation}\label{E:Jacobian}
  \left|\frac{\partial(t_1,\ldots,t_\ell)}{\partial(v_1,\ldots,v_\ell)}\right|=\frac{t_1\cdots t_\ell}{v_1\cdots v_\ell},
\end{equation}
since $\frac{\partial t_i}{\partial v_i}=-\frac{t_i}{v_i}$, and $\frac{\partial t_i}{\partial v_j}=0$ for $j<i$. Therefore, after collecting \eqref{E:Change_of_Variables}, \eqref{E:New_Integrand}, and \eqref{E:Jacobian}, the integral in \eqref{E:OrderedInt2} takes the shape
\begin{equation*}
    \int_{\ell}^{u}\int_{\ell-1}^{v_{\ell}-1}\cdots\int_{1}^{v_2-1}\left(u\prod_{i=1}^{\ell}\left(1-\frac{1}{v_i}\right)\right)^\kappa\left|\frac{\partial(t_1,\ldots,t_\ell)}{\partial(v_1,\ldots,v_\ell)}\right|\frac{dv_1}{t_1}\cdots\frac{dv_{\ell-1}}{t_{\ell-1}}\frac{dv_\ell}{t_\ell},
\end{equation*}
or equivalently,
\begin{equation*}
    u^\kappa\int_{\ell}^{u}\int_{\ell-1}^{v_{\ell}-1}\cdots\int_{1}^{v_2-1}\prod_{i=1}^{\ell}\left(1-\frac{1}{v_i}\right)^\kappa\frac{dv_1}{v_1}\cdots\frac{dv_{\ell-1}}{v_{\ell-1}}\frac{dv_\ell}{v_\ell}.
\end{equation*}
This last integral is clearly equivalent to \eqref{E:multK1}, and our proof of Lemma \ref{decomplemma} is complete.
\end{proof}
We pause now to record a useful relationship that follows readily from the particular form of $K_{\ell}(u,\kappa)$ appearing in Lemma \ref{decomplemma}. In particular, using \eqref{E:multK2}, it is easy to relate $K_{\ell}(u,\kappa)$ back to $K_{\ell}(u,0)$, as
\begin{equation}\label{Kconvolvidentity}
  K_{\ell}(u,\kappa)=\kappa\int_{\ell}^{u}(u-t)^{\kappa-1}K_{\ell}(t,0)dt.
\end{equation}
To see this, observe that the right-hand side of \eqref{E:multK2} may be rewritten using symmetry as
\begin{equation*}
  \frac{\kappa!}{\ell!}\idotsint\limits_{\substack{t_1,\ldots,t_\ell\ge1 \\ t_1+\cdots+t_\ell\le u}}\int_{t_1+\cdots+t_\ell}^{u}\int_{t_1+\cdots+t_\ell}^{s_\kappa}\cdots \int_{t_1+\cdots+t_\ell}^{s_{2}}ds_{1}\cdots ds_{\kappa-1}ds_{\kappa}\frac{dt_1}{t_1}\cdots \frac{dt_\ell}{t_\ell}.
\end{equation*}
Interchanging the order of integration, this becomes
\begin{equation*}
  \kappa!\int_{\ell}^{u}\int_{\ell}^{s_\kappa}\cdots \int_{\ell}^{s_{2}}K_{\ell}(s_{1},0)ds_{1}\cdots ds_{\kappa-1}ds_{\kappa}.
\end{equation*}
Cauchy's repeated integration formula then gives the expression appearing on the right-hand side of \eqref{Kconvolvidentity}.
\section{Generalized Dickman Constants}
Having proved Lemma \ref{decomplemma}, we move on and address the constants $C_{r}$ appearing in Theorem \ref{ST1}, and $C_{r,\kappa}$ in Theorem \ref{T_1}. These constants are initially defined by complex integrals. Broadhurst \cite{Broadhurst} referred to the constants $C_r$ as the \emph{Dickman constants}, and conjectured their generating function, $e^{\gamma z}/\Gamma(1-z)$. Soundararajan \cite[Proposition 1]{Sound} later proved this conjecture.
\begin{lemma}[Soundararajan, 2012]\label{SoundCr}
For natural numbers $r\ge0$, define the constants
\begin{equation}
C_{r}:=\frac{1}{r!}\frac{1}{2\pi i}\int_{c-i\infty}^{c+i\infty}\frac{e^{s}}{s}\left(\log s+\gamma\right)^{r}ds,
\end{equation}
where $c>0$ and the integral is interpreted as the Cauchy principal value. We have, for all complex $z$,
\begin{equation*}
\sum_{r=0}^{\infty}C_{r}z^{r}=\frac{e^{\gamma z}}{\Gamma\left(1-z\right)}.
\end{equation*}
\end{lemma}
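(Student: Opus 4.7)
My plan is to recognise the $C_{r}$ as the Taylor coefficients of an explicit holomorphic function that can be evaluated by Hankel's classical integral representation of $1/\Gamma$.

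First I would set
\[
F(z):=\frac{1}{2\pi i}\int_{c-i\infty}^{c+i\infty}\frac{e^{s}}{s}e^{z(\log s+\gamma)}\,ds=\frac{e^{\gamma z}}{2\pi i}\int_{c-i\infty}^{c+i\infty}e^{s}s^{z-1}\,ds,
\]
interpreted as a Cauchy principal value. A standard uniform-convergence argument, exploiting the oscillatory decay of the integrand along vertical lines, shows that $F$ is holomorphic in a neighbourhood of $z=0$ and that differentiation under the integral sign is permissible. Expanding $e^{z(\log s+\gamma)}$ as a power series in $z$ and reading off the $r$-th coefficient then identifies $C_{r}=F^{(r)}(0)/r!$, so
\[
\sum_{r=0}^{\infty}C_{r}z^{r}=F(z)
\]
holds on a disc about the origin, reducing the problem to the evaluation of $F$.

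Next I would evaluate $F$ by contour deformation. For $\mathrm{Re}(z)<0$ the integrand $e^{s}s^{z-1}$ decays rapidly enough that the vertical line may be collapsed onto a Hankel contour $H$ wrapping the negative real axis, with no residues picked up and negligible contribution from the connecting arcs at infinity. Hankel's classical integral formula (see \cite{Watson}) then gives
\[
\frac{1}{2\pi i}\int_{H}e^{s}s^{z-1}\,ds=\frac{1}{\Gamma(1-z)},
\]
so $F(z)=e^{\gamma z}/\Gamma(1-z)$ throughout $\mathrm{Re}(z)<0$. Since the right-hand side is entire and $F$ is holomorphic near $0$, the identity principle extends the equality to a disc about the origin, and comparing Taylor coefficients there yields the claimed generating series.

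The principal obstacle is justifying the contour deformation, as the original integral is only Cauchy principal-value convergent along $\mathrm{Re}(s)=c$ for $0\le\mathrm{Re}(z)<1$. Restricting first to the half-plane $\mathrm{Re}(z)<0$ restores absolute convergence and makes the arcs at infinity vanish straightforwardly, after which analytic continuation transports the identity back to a neighbourhood of $z=0$ without the need for delicate estimates on the original vertical contour.
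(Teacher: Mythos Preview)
Your proposal is correct and follows essentially the same route as the paper (which defers to \cite{Sound} for this lemma but reproduces the argument for the generalisation in Lemma~\ref{T1_begin}): form the generating function as a single contour integral, evaluate it via Hankel's integral for $1/\Gamma$, and finish by analytic continuation. Your additional care in first passing to $\mathrm{Re}(z)<0$ to secure absolute convergence before deforming the contour is exactly the refinement needed in the $\kappa=0$ case, where the vertical-line integral is only a Cauchy principal value.
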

\begin{proof}
See \cite[Proposition 1]{Sound}.
\end{proof}
The generating function connects $C_r$ to the values of the Riemann zeta-function at integers. This connection is made explicit by Corollary \ref{CorCr} below.
\begin{corollary}\label{CorCr}
  The constants $C_r$ defined above in Lemma \ref{SoundCr} can also be written as
  \begin{equation*}
  C_r=\frac{1}{r!}\sum_{k=1}^{r}(-1)^{k}B_{r,k}\left(0,1!\zeta(2),2!\zeta(3),\ldots,(r-k)!\zeta(r-k+1)\right),\quad r\ge1,
  \end{equation*}
  where $C_0=1$, and $B_{r,k}$ denotes the Bell polynomial,
\begin{equation*}
  B_{r,k}(x_1,\ldots,x_{r-k+1}):=\sum_{\substack{j_1,\ldots,j_{r-k+1}\ge0\\ j_1+j_2+\cdots+j_{r-k+1}=k\\ j_1+2j_2+\cdots+(r-k+1)j_{r-k+1}=r}}\frac{r!}{j_1!\cdots j_{r-k+1}!}\prod_{i=1}^{r-k+1}\left(\frac{x_i}{i!}\right)^{j_i}.
\end{equation*}
\end{corollary}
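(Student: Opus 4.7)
The plan is to apply the exponential formula (Faà di Bruno) to the generating function of $C_r$ after taking its logarithm. First I would recall from the Weierstrass product for $\Gamma$ that
\[
\log \Gamma(1-z) = \gamma z + \sum_{k=2}^{\infty}\frac{\zeta(k)}{k}z^{k},
\]
valid for $|z|<1$. Taking logarithms in Lemma \ref{SoundCr}, this immediately gives
\[
\log\frac{e^{\gamma z}}{\Gamma(1-z)} = -\sum_{k=2}^{\infty}\frac{\zeta(k)}{k}z^{k} = \sum_{k=1}^{\infty}\frac{a_k}{k!}z^{k},
\]
where $a_1=0$ and $a_k=-(k-1)!\,\zeta(k)$ for $k\ge 2$.

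Next, I would apply the standard expansion
\[
\exp\!\left(\sum_{k=1}^{\infty}\frac{a_k}{k!}z^{k}\right) = \sum_{r=0}^{\infty}\frac{B_{r}(a_1,\dots,a_r)}{r!}z^{r},
\]
in which $B_{r}$ denotes the complete Bell polynomial, together with the decomposition $B_{r}(a_1,\dots,a_r)=\sum_{k=1}^{r}B_{r,k}(a_1,\dots,a_{r-k+1})$ for $r\ge1$. Equating coefficients with $\sum_r C_r z^r$ yields
\[
C_r = \frac{1}{r!}\sum_{k=1}^{r} B_{r,k}\bigl(0,\,-1!\zeta(2),\,-2!\zeta(3),\,\ldots,\,-(r-k)!\zeta(r-k+1)\bigr).
\]

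Finally, I would invoke the homogeneity of the partial Bell polynomials, namely $B_{r,k}(\alpha x_1,\alpha x_2,\dots,\alpha x_{r-k+1})=\alpha^{k}B_{r,k}(x_1,\dots,x_{r-k+1})$, which follows directly from the explicit formula defining $B_{r,k}$ since each monomial is of total degree $j_1+\cdots+j_{r-k+1}=k$. Taking $\alpha=-1$ pulls a $(-1)^k$ out of each term and produces the stated expression. The case $r=0$ is immediate since $\exp(0)=1$, giving $C_0=1$. There is no real obstacle here; the only modest subtlety is keeping the signs straight when passing from $a_k=-(k-1)!\zeta(k)$ to the Bell polynomial evaluated at the positive arguments $(k-1)!\zeta(k)$, which is precisely where the homogeneity step is used.
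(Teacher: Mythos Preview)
Your proof is correct and follows essentially the same route as the paper: take the logarithm of the generating function using the standard Taylor expansion of $\log\Gamma$, apply Fa\`a di Bruno / the exponential formula to identify the coefficients as partial Bell polynomials in $a_k=-(k-1)!\zeta(k)$, and then use the degree-$k$ homogeneity of $B_{r,k}$ to extract the sign $(-1)^k$. The only cosmetic difference is that the paper writes the intermediate step via $L(z)=-\sum_{k\ge2}\frac{\zeta(k)}{k}z^k$ and its derivatives $L^{(j)}(0)$, whereas you phrase it through the complete Bell polynomial $B_r=\sum_{k}B_{r,k}$; these are the same computation.
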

\begin{proof}[Proof of Corollary 1]
Consider the power series expansion,
\begin{equation*}
  \log\Gamma(1+z)=-\gamma z+\sum_{k=2}^{\infty} \frac{\zeta(k)}{k}(-z)^k,\quad |z|<1.
\end{equation*}
Replacing $z$ with $-z$, and then exponentiating gives
\begin{equation}\label{altgenfunc}
  \frac{e^{\gamma z}}{\Gamma(1-z)}=\exp\left(-\sum_{k=2}^{\infty} \frac{\zeta(k)}{k} z^k\right).
\end{equation}
Making use of Fa\`a di Bruno's formula to expand the right-hand side of \eqref{altgenfunc} as a power series yields
\begin{equation}\label{E:Bellgen}
  \frac{e^{\gamma z}}{\Gamma(1-z)}=1+\sum_{r=1}^{\infty}\left(\frac{1}{r!}\sum_{k=1}^{r}B_{r,k}\left(L'(0),L''(0),\ldots,L^{(r-k+1)}(0)\right)\right)z^r,
\end{equation}
where we have set $L(z)=-\sum_{k=2}^{\infty} \frac{\zeta(k)}{k} z^k$. Now, since $L^{(j)}(0)=-(j-1)!\zeta(j)$ for $j>1$, $L'(0)=0$, and $B_{r,k}(-x_1,\ldots,-x_{r-k+1})=(-1)^{k}B_{r,k}(x_1,\ldots,x_{r-k+1})$, setting \eqref{E:Soundgen} and \eqref{E:Bellgen} equal to eachother completes the proof.
\end{proof}
We will need a generalization of the Dickman constants for the purpose of this paper. Thus, the constants $C_{r,\kappa}$ defined below contain the Dickman constants $C_{r,0}=C_{r}$ as a special case.
\begin{lemma}\label{T1_begin}
For natural numbers $r,\kappa\ge0$, define the constants
\begin{equation}\label{E1_begin}
C_{r,\kappa}=\frac{1}{r!}\frac{1}{2\pi i}\int_{c-i\infty}^{c+i\infty}\frac{e^{s}}{s^{\kappa+1}}\left(\log s+\gamma\right)^{r}ds
\end{equation}
where $c>0$ and the integral is interpreted as the Cauchy principal value. We have, for all complex $z$,
\begin{equation}\label{E2_begin}
\sum_{r=0}^{\infty}C_{r,\kappa}\ z^{r}=\frac{e^{\gamma z}}{\Gamma\left(\kappa+1-z\right)}.
\end{equation}
\end{lemma}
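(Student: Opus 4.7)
The plan is to derive the generating function by manipulating the integral defining $C_{r,\kappa}$ into a Hankel representation, and then applying the classical Hankel formula for $1/\Gamma$; this follows the same template Soundararajan used in the $\kappa=0$ case recorded as Lemma \ref{SoundCr}. First I would deform the vertical contour $\mathrm{Re}(s)=c$ in \eqref{E1_begin} into a Hankel contour $\mathcal{H}$ that comes in from $-\infty$ below the negative real axis, encircles the origin once at some fixed radius, and exits to $-\infty$ above the negative real axis. The integrand $e^{s}(\log s+\gamma)^{r}/s^{\kappa+1}$ is holomorphic in the slit plane $\mathbb{C}\setminus(-\infty,0]$, so Cauchy's theorem justifies the deformation once I check that the connecting arcs contribute nothing in the limit: on the horizontal pieces at height $\pm iT$ the bound $|e^{s}|\le e^{c}$ together with the polynomial decay $|s|^{-\kappa-1}$ and the mild growth $|\log s+\gamma|^{r}\ll(\log T)^{r}$ drives the integrand to zero as $T\to\infty$ (using $\kappa\ge 0$), while on the far left arc at $\mathrm{Re}(s)=-R$ the factor $e^{-R}$ handles the limit $R\to\infty$.

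With the resulting Hankel representation
\[
C_{r,\kappa}=\frac{1}{r!}\cdot\frac{1}{2\pi i}\int_{\mathcal{H}}\frac{e^{s}(\log s+\gamma)^{r}}{s^{\kappa+1}}\,ds
\]
in hand, I would multiply by $z^{r}$, sum over $r\ge0$, and swap the sum with the integral. The interchange is justified by Fubini: on $\mathcal{H}$ the factor $e^{s}$ supplies exponential decay along the two rays going to $-\infty$, so the crude bound
\[
\sum_{r=0}^{\infty}\frac{|z|^{r}|\log s+\gamma|^{r}}{r!}=\exp\!\bigl(|z|\,|\log s+\gamma|\bigr)
\]
times $|e^{s}|/|s|^{\kappa+1}$ is integrable along $\mathcal{H}$ for each fixed $z\in\mathbb{C}$. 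After collapsing the Taylor series one obtains
\[
\sum_{r=0}^{\infty}C_{r,\kappa}z^{r}=\frac{1}{2\pi i}\int_{\mathcal{H}}\frac{e^{s}}{s^{\kappa+1}}\exp\!\bigl(z(\log s+\gamma)\bigr)\,ds=\frac{e^{\gamma z}}{2\pi i}\int_{\mathcal{H}}e^{s}\,s^{z-\kappa-1}\,ds.
\]

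To finish I would quote Hankel's integral $1/\Gamma(w)=(2\pi i)^{-1}\int_{\mathcal{H}}e^{s}s^{-w}\,ds$, valid for all complex $w$, with $w=\kappa+1-z$. This identifies the last display as $e^{\gamma z}/\Gamma(\kappa+1-z)$ and establishes \eqref{E2_begin} for all complex $z$, since both sides are entire functions of $z$. The only genuine obstacle is the contour deformation in the first step; once that has been justified, the remaining manipulations are formal and mirror Soundararajan's treatment of the $\kappa=0$ case. Because $\kappa\ge0$ is an integer, the denominator $s^{\kappa+1}$ gives enough polynomial decay that these bookkeeping estimates go through uniformly in $\kappa$.
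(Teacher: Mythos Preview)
Your proof is correct and follows essentially the same route as the paper: both reduce the generating function to Hankel's contour integral for $1/\Gamma$. The only difference is in bookkeeping---you deform each $C_{r,\kappa}$ to the Hankel contour first and then sum (so Fubini gives all $z$ at once), whereas the paper stays on the vertical line, proves the bound $|C_{r,\kappa}|\ll_\kappa (M/\kappa)^r$ to justify interchanging sum and integral for $|z|$ small (deferring $\kappa=0$ to Soundararajan's lemma), applies Hankel, and then extends to all $z$ by analytic continuation.
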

\begin{proof}
We will assume $\kappa\ge1$ in view of Lemma \ref{SoundCr}. In this case, the integral appearing on the right side of \eqref{E1_begin} is absolutely convergent. In fact, choosing $c=1$ and then letting $s=1+it$, we find that
\begin{align*}
  |C_{r,\kappa}|&\ll\frac{M^r}{r!}\int_{-\infty}^{\infty}\frac{\left(1+\log\sqrt{1+t^2}\right)^r}{\left(\sqrt{1+t^2}\right)^{\kappa+1}}dt\ll_{\kappa}\frac{M^r}{r!}\int_{0}^{\infty}\frac{\left(1+\log \left(1+t\right)\right)^r}{\left(1+t\right)^{\kappa+1}}dt,
\end{align*}
for some constant $M>0$.\\ 
Next, we make a change of variable $w=\kappa(1+\log(1+t))$, and observe that
\begin{equation*}
  \int_{0}^{\infty}\frac{\left(1+\log \left(1+t\right)\right)^r}{\left(1+t\right)^{\kappa+1}}dt\ll_{\kappa}\frac{1}{\kappa^{r+1}}\int_{\kappa}^{\infty}w^{r}e^{-w}dw\ll_{\kappa}\frac{\Gamma(r+1)}{\kappa^{r+1}},
\end{equation*}
which gives the bound,
\begin{align*}
    |C_{r,\kappa}|\ll_{\kappa}\frac{M^{r}}{\kappa^{r+1}}.
\end{align*}
Thus, the series in \eqref{E2_begin} converges absolutely for $|z|$ inside a disk of radius $O(\kappa)$ and defines an analytic function in that region. Inside this region,
\begin{equation*}
\sum_{r=0}^{\infty}C_{r,\kappa}\ z^{r}=\frac{1}{2\pi i}\int_{c-i\infty}^{c+i\infty}\frac{e^s}{s^{\kappa+1}}e^{\gamma z}s^zds=\frac{e^{\gamma z}}{\Gamma\left(\kappa+1-z\right)}.
\end{equation*}
Here, as in Soundararajan's proof of Lemma \ref{SoundCr}, we made use of Hankel's contour integral for the reciprocal of the $\Gamma$-function. To complete the proof, observe that since $e^{\gamma z}/\Gamma\left(\kappa+1-z\right)$ is analytic for all $z\in\mathbb{C}$, the absolute convergence of the series in \eqref{E2_begin} for all $z$ follows by analytic continuation.
\end{proof}
For integers $\kappa\ge1$, the constants $C_{r,\kappa}$ can be related back to $C_{r,0}=C_{r}$ using the recursive formula,
\begin{equation*}
  C_{r,\kappa}=\sum_{j=0}^{r}\frac{C_{j,\kappa-1}}{\kappa^{r-j+1}}.
\end{equation*}
This identity follows easily from \eqref{E2_begin} since, for $|z|<\kappa$,
\begin{align*}
  \sum_{r=0}^{\infty}C_{r,\kappa}z^r=\frac{1}{\kappa-z}\frac{e^{\gamma z}}{\Gamma(\kappa-z)}=\sum_{i=0}^{\infty}\frac{z^i}{\kappa^{i+1}}\sum_{j=0}^{\infty}C_{j,\kappa-1}z^j.
\end{align*}
Multiplying the series on the right and comparing coefficients yields the recursive formula.
\section{Proof of Theorem \ref{T_1}}
We are now ready to prove Theorem \ref{T_1}. Recall that for any $c>0$,
\begin{equation*}
\frac{\Gamma(\kappa+1)}{2\pi i}\int_{c-i\infty}^{c+i\infty}e^{\lambda s}\frac{ds}{s^{\kappa+1}}=
  \begin{cases}
    \lambda^{\kappa}& \text{if $\lambda>0$,}\\
    0& \text{if $\lambda<0$.}
  \end{cases}
\end{equation*}
Using this integral to detect the condition $t_1+\cdots+t_\ell\le u$ in \eqref{E:multK2}, and then interchanging the order of integration gives
\begin{equation}\label{E11}
  K_{\ell}(u,\kappa)=\frac{\Gamma(\kappa+1)}{2\pi i\ \ell!}\int_{c-i\infty}^{c+i\infty}e^{us}E_1(s)^{\ell}\frac{ds}{s^{\kappa+1}},
\end{equation}
where
\begin{equation*}
  E_{1}(s):=\int_{1}^{\infty}\frac{e^{-ts}}{t}dt.
\end{equation*}
Now, using the relationship derived in \cite[p.28]{Sound}, namely
\begin{equation*}
\Ein(s)=\gamma+\log s+E_{1}(s),
\end{equation*}
equation \eqref{E11} becomes
\begin{equation*}
K_{\ell}(u,\kappa)=\frac{\Gamma(\kappa+1)}{\ell!}\frac{1}{2\pi i}\int_{c-i\infty}^{c+i\infty}e^{us}\left(\Ein(s)-\log s -\gamma\right)^{\ell}\frac{ds}{s^{\kappa+1}}.
\end{equation*}
Replacing $s$ with $s/u$, we have
\begin{equation}\label{E13}
K_{\ell}(u,\kappa)=\frac{\Gamma(\kappa+1)}{\ell!}u^{\kappa}\frac{1}{2\pi i}\int_{c-i\infty}^{c+i\infty}e^{s}\left(\log u-\log s-\gamma+G(u,s)\right)^{\ell}\frac{ds}{s^{\kappa+1}},
\end{equation}
where
\begin{equation}\label{Gdef}
G(u,s):=\Ein\left(\frac{s}{u}\right)=\int_{0}^{1/u}\frac{1-e^{-ts}}{t}dt.
\end{equation}
Now, using the binomial theorem, equation \eqref{E13} becomes
\begin{equation}\label{E14}
K_{\ell}(u,\kappa)=\sum_{m=0}^{\ell}\binom{\ell}{m}\frac{\Gamma(\kappa+1)}{\ell!}\frac{u^{\kappa}}{2\pi i}\int_{(c)}e^{s}G(u,s)^{m}\left(\log u-\log s-\gamma\right)^{\ell-m}\frac{ds}{s^{\kappa+1}},
\end{equation}
where we have abbreviated $\int_{c-i\infty}^{c+i\infty}=\int_{(c)}$. The main contribution in \eqref{E14} comes from the terms corresponding to integers $0\le m\le \min\left(\kappa,\ell\right)$. Now, if $\kappa<\ell$, then we use Lemma \ref{Error1} below to show that the discarded terms,
\begin{equation*}
\sum_{m=\kappa+1}^{\ell}\binom{\ell}{m}\frac{\Gamma(\kappa+1)}{\ell!}\frac{u^{\kappa}}{2\pi i}\int_{(c)}e^{s}G(u,s)^{m}\left(\log u-\log s-\gamma\right)^{\ell-m}\frac{ds}{s^{\kappa+1}},
\end{equation*}
are bounded uniformly in $u\ge\ell$ by the stated error term in \eqref{KMain}. For the remaining terms,
\begin{equation*}
\sum_{m=0}^{\min\left(\kappa,\ell\right)}\binom{\ell}{m}\frac{\Gamma(\kappa+1)}{\ell!}\frac{u^{\kappa}}{2\pi i}\int_{(c)}e^{s}G(u,s)^{m}\left(\log u-\log s-\gamma\right)^{\ell-m}\frac{ds}{s^{\kappa+1}},
\end{equation*}
we use the power series expansion in \eqref{EDef},
\begin{equation}\label{EGseries}
G(u,s)^{m}=\Ein\left(\frac{s}{u}\right)^m=\sum_{n=m}^{\infty}E_{n,m}\left(\frac{s}{u}\right)^n,
\end{equation}
and keep the terms in this expansion arising from $m\le n\le\kappa$. Lemma \ref{Error2}, below, shows that the discarded terms,
\begin{equation*}
\sum_{m=0}^{\min\left(\kappa,\ell\right)}\binom{\ell}{m}\frac{\Gamma(\kappa+1)}{\ell!}\frac{u^{\kappa}}{2\pi i}\int_{(c)}e^{s}R_{m}(u,s)\left(\log u-\log s-\gamma\right)^{\ell-m}\frac{ds}{s^{\kappa+1}},
\end{equation*}
where
\begin{equation}\label{E:Rdef}
R_{m}(u,s):=G(u,s)^m-\sum_{n=m}^{\kappa}E_{n,m}\left(\frac{s}{u}\right)^{n},
\end{equation}
are also bounded uniformly in $u\ge\ell$ by the stated error term in \eqref{KMain}.
Therefore, we have that
\begin{equation*}
K_{\ell}(u,\kappa)=\tilde{K}_{\ell}(u,\kappa)+O_{\kappa,\ell}\left(\frac{\left(1+\log u\right)^{\ell}}{u}\right),
\end{equation*}
where
\begin{equation*}
\tilde{K}_{\ell}(u,\kappa)=\sum_{m,n}\binom{\ell}{m}\frac{\Gamma(\kappa+1)}{\ell!}E_{n,m}\frac{u^{\kappa-n}}{2\pi i}\int_{(c)}e^{s}\left(\log u-\log s-\gamma\right)^{\ell-m}\frac{ds}{s^{\kappa-n+1}}.
\end{equation*}
The sum is over the pairs $m,n$ such that $0\le m\le \min\left(\kappa,\ell\right)$, $m\le n\le\kappa$.
Next, the binomial theorem is used to expand the integrand as
\begin{equation*}
\left(\log u-\log s-\gamma\right)^{\ell-m}=\sum_{r=0}^{\ell-m}\binom{\ell-m}{r}(-1)^{r}\left(\log s+\gamma\right)^{r}\log^{\ell-m-r}u,
\end{equation*}
so that
\begin{equation*}
\tilde{K}_{\ell}(u,\kappa)=\sum_{m,n,r}(-1)^{r}\binom{\ell}{m}\binom{\ell-m}{r}\frac{\Gamma(\kappa+1)r!}{\ell!}E_{n,m}C_{r,\kappa-n}u^{\kappa-n}\log^{\ell-m-r}u,
\end{equation*}
where the sum is over the triples $m,n,r$ such that $0\le m\le \min\left(\kappa,\ell\right)$, $m\le n\le\kappa$, and $0\le r\le\ell-m$, and $C_{r,\kappa-n}$ is given by the integral in \eqref{E1_begin}.\\ 
The constants $C_{r,\kappa}$ are generated by the series in \eqref{CDef} by Lemma \ref{T1_begin}. Therefore, we have the asymptotic formula
\begin{equation*}
K_{\ell}(u,\kappa)=\sum_{m=0}^{\min\left(\kappa,\ell\right)}\sum_{n=m}^{\kappa}\sum_{r=0}^{\ell-m}D_{m,n,r,\kappa,\ell}\ u^{\kappa-n}\log^{\ell-m-r}u+O_{\kappa,\ell}\left(\frac{(1+\log u)^{\ell}}{u}\right),
\end{equation*}
where
\begin{equation*}
D_{m,n,r,\kappa,\ell}=(-1)^{r}\binom{\ell}{m}\binom{\ell-m}{r}\frac{\Gamma(\kappa+1)r!}{\ell!}E_{n,m}C_{r,\kappa-n}.
\end{equation*}
Comparing this expression with \eqref{DDef}, we see that the proof of Theorem \ref{T_1} is complete.
\section{Lemmata}
This section is devoted to proving Lemma \ref{Error1} and \ref{Error2}. To begin, we will need to establish some simple bounds on $|G(u,s)|$, $|\frac{\partial}{\partial s}G(u,s)|$, $|R_{m}(u,s)|$, and $|\frac{\partial}{\partial s}R_{m}(u,s)|$ when $\Re s$ is fixed. In fact, we will assume henceforth that $\Re s=1$. 
\begin{lemma}\label{G_inequality}
Suppose that $\Re s=1$ and $u\ge 1$. Then we have
\begin{equation*}
  |G(u,s)| \ll
  \begin{cases}
  \frac{|s|}{u}& \text{if $|s|<u$,}\\
  1+\log\left(\frac{|s|}{u}\right)& \text{if $|s|>u$.}
  \end{cases}
\end{equation*}
\end{lemma}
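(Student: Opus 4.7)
The plan is to work directly with the defining integral
\[
  G(u,s) = \int_{0}^{1/u} \frac{1-e^{-ts}}{t}\,dt
\]
and combine two elementary pointwise bounds on the numerator. First, since $\Re s = 1$, for $t\ge 0$ one has $|e^{-ts}| = e^{-t}\le 1$, hence the universal bound $|1-e^{-ts}|\le 2$. Second, writing $1-e^{-ts} = ts\int_0^1 e^{-\tau ts}\,d\tau$ and again using $|e^{-\tau ts}|\le 1$, one obtains the linearized bound $|1-e^{-ts}|\le |ts|$. Together these give $|1-e^{-ts}|\ll \min(|ts|,1)$, which is sharp in its respective regimes.

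Next I would split into the two cases of the lemma according to whether the interval of integration lies in the linear regime. If $|s|<u$, then for every $t\in[0,1/u]$ we have $|ts|\le |s|/u < 1$, so the linearized bound applies uniformly and
\[
  |G(u,s)| \le \int_{0}^{1/u}\frac{|ts|}{t}\,dt = \frac{|s|}{u},
\]
which is the first case. If instead $|s|>u$, then $1/|s|<1/u$, and I would split the integral at $t_{0}=1/|s|$. On $[0,1/|s|]$ the linearized bound contributes
\[
  \int_{0}^{1/|s|}\frac{|ts|}{t}\,dt = 1,
\]
while on $[1/|s|,1/u]$ the universal bound contributes
\[
  \int_{1/|s|}^{1/u}\frac{2}{t}\,dt = 2\log\!\left(\frac{|s|}{u}\right).
\]
Adding the two pieces yields $|G(u,s)|\ll 1+\log(|s|/u)$, the second case.

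There is no real obstacle here: everything reduces to the two elementary inequalities for $|1-e^{-ts}|$ and choosing the correct breakpoint $t_{0}=1/|s|$ at which the linear estimate ceases to be efficient. The same dyadic split will be reused when bounding $|\partial_{s} G(u,s)|$, $|R_{m}(u,s)|$, and $|\partial_{s} R_{m}(u,s)|$ in the subsequent lemmas, so it is worth stating the estimates with this structure in mind.
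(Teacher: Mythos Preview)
Your proof is correct and follows essentially the same approach as the paper: the paper quotes Soundararajan's pointwise estimate $|G(u,s)|\ll\int_{0}^{1/u}\min(|s|,1/t)\,dt$, which is exactly what your two bounds on $|1-e^{-ts}|$ yield after dividing by $t$, and then splits into the same two cases with the same breakpoint $t_{0}=1/|s|$. You have simply made the derivation of the integrand bound explicit rather than citing it.
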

\begin{proof}
This estimate follows immediately from the estimate of Soundararajan \cite[p.29]{Sound}
\begin{equation*}
  |G(u,s)|\ll \int_{0}^{1/u}\min\left(|s|,\frac{1}{t}\right)dt,
\end{equation*}
upon inspection of the integrand in \eqref{Gdef}. If $|s|<u$, then $t<\frac{1}{u}<\frac{1}{|s|}$, and so $\min\left(|s|,\frac{1}{t}\right)=|s|$, giving
\begin{equation*}
  |G(u,s)|\ll \int_{0}^{1/u}|s|dt=\frac{|s|}{u}.
\end{equation*}
On the other hand, if $|s|>u$, then we have $\frac{1}{|s|}<\frac{1}{u}$, and so
\begin{equation*}
  |G(u,s)|\ll \int_{0}^{1/|s|}\min\left(|s|,\frac{1}{t}\right)dt+\int_{1/|s|}^{1/u}\min\left(|s|,\frac{1}{t}\right)dt\ll 1+\log\left(\frac{|s|}{u}\right).
\end{equation*}
\end{proof}
\begin{lemma}\label{G'_inequality}
Suppose that $\Re s=1$ and that $u\ge 1$. Then we have
\begin{equation*}
  \left|\frac{\partial}{\partial s}G(u,s)\right|\ll
  \begin{cases}
  \frac{1}{u}& \text{if $|s|<u$,}\\
  \frac{1}{|s|}& \text{if $|s|>u$}.
  \end{cases}
\end{equation*}
\end{lemma}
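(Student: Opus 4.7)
The plan is to differentiate $G(u,s)$ directly under the integral sign. From the definition \eqref{Gdef},
\[
  \frac{\partial}{\partial s}G(u,s)=\frac{\partial}{\partial s}\int_{0}^{1/u}\frac{1-e^{-ts}}{t}\,dt=\int_{0}^{1/u}e^{-ts}\,dt,
\]
the differentiation under the integral being justified since the integrand of the derivative is continuous in $t$ on $[0,1/u]$ and analytic in $s$. This leaves a single elementary integral to bound, and the two cases of the lemma will come from two different ways of controlling it.

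For the case $|s|<u$, I would use the trivial bound $|e^{-ts}|=e^{-t\Re s}=e^{-t}\le 1$ on the interval $t\in[0,1/u]$, giving
\[
  \left|\frac{\partial}{\partial s}G(u,s)\right|\le\int_{0}^{1/u}dt=\frac{1}{u},
\]
which is the desired estimate.

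For the case $|s|>u$, I would instead evaluate the integral explicitly,
\[
  \int_{0}^{1/u}e^{-ts}\,dt=\frac{1-e^{-s/u}}{s},
\]
and then use that $|1-e^{-s/u}|\le 1+e^{-\Re s/u}=1+e^{-1/u}\le 2$, so that
\[
  \left|\frac{\partial}{\partial s}G(u,s)\right|=\left|\frac{1-e^{-s/u}}{s}\right|\le\frac{2}{|s|},
\]
yielding the claimed bound $\ll 1/|s|$. There is no real obstacle here; the only minor point to check is that the differentiation under the integral sign is legitimate, which follows from standard dominated-convergence arguments (the integrand $e^{-ts}$ is bounded by $1$ on $[0,1/u]$ uniformly in $s$ with $\Re s=1$). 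This parallels, and in fact simplifies, the analogous estimate for $|G(u,s)|$ in Lemma \ref{G_inequality}, since here the factor $1/t$ has been removed by the differentiation.
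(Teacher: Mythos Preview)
Your proof is correct and essentially matches the paper's. The paper obtains the closed form $\partial G/\partial s=(1-e^{-s/u})/s$ in one step via the Fundamental Theorem of Calculus (since $G(u,s)=\Ein(s/u)$), whereas you differentiate under the integral and then evaluate, arriving at the same expression; your separate treatment of the case $|s|<u$ by bounding the integral $\int_0^{1/u}e^{-ts}\,dt$ directly is a harmless variant of bounding the closed form.
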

\begin{proof}
This estimate is contained in Soundararajan's paper \cite[p.29]{Sound}. One uses the Fundamental Theorem of Calculus, so that
\begin{equation*}
  \frac{\partial}{\partial s}G(u,s)=\frac{1-e^{-s/u}}{s},
\end{equation*}
from which the estimate follows immediately.
\end{proof}
\begin{lemma}\label{L:R_ineq}
Suppose $0\le m\le\min(\kappa,\ell)$, $\Re s=1$, and $u\ge 1$. Then we have
\begin{equation*}
\left|R_{m}(u,s)\right|\ll_{\kappa,\ell}
\begin{cases}
\displaystyle\frac{\left|s\right|^{\kappa+1}}{u^{\kappa+1}},& \text{if $\left|s\right|<u$,}\\
\displaystyle\frac{\left|s\right|^{\kappa}}{u^{\kappa}},& \text{if $\left|s\right|>u$.}
\end{cases}
\end{equation*}
\end{lemma}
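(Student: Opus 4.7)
The plan is to treat $R_m(u,s)$ as a remainder of the Taylor expansion \eqref{EGseries} of $G(u,s)^m=\Ein(s/u)^m$ about $s=0$, splitting the analysis at the natural threshold $|s|=u$, which is where this truncated expansion ceases to be the efficient representation.

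For the regime $|s|<u$, the definition \eqref{E:Rdef} together with \eqref{EGseries} gives
\[
R_m(u,s)=\sum_{n=\kappa+1}^{\infty}E_{n,m}\left(\frac{s}{u}\right)^{n},
\]
and since $|s/u|<1$ each factor $|s/u|^{n}$ with $n\ge\kappa+1$ is bounded by $|s/u|^{\kappa+1}$, so pulling this factor out reduces the bound to the absolute summability of the coefficients $E_{n,m}$. To secure this, I would majorize: from $\Ein(z)=\sum_{n\ge1}(-1)^{n-1}z^n/(n\cdot n!)$ one sees that the entire function $F(z):=\sum_{n\ge1}z^n/(n\cdot n!)$ majorizes $\Ein$ term-by-term, and so by the Cauchy product $\sum_{n\ge m}|E_{n,m}|\le F(1)^m<\infty$. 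This yields the first branch $|R_m(u,s)|\ll_{\kappa,m}(|s|/u)^{\kappa+1}$.

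For the regime $|s|>u$, I would instead apply the triangle inequality directly to \eqref{E:Rdef}, bounding $|G(u,s)|^m$ and the polynomial piece $\sum_{n=m}^{\kappa}E_{n,m}(s/u)^n$ separately. Lemma \ref{G_inequality} gives $|G(u,s)|^m\ll(1+\log(|s|/u))^m$, and since $|s|/u>1$ and any positive power dominates powers of the logarithm on $[1,\infty)$, this is $\ll_{\kappa,m}(|s|/u)^{\kappa}$ provided $\kappa\ge1$. The polynomial piece is bounded by $\bigl(\sum_{n=m}^{\kappa}|E_{n,m}|\bigr)(|s|/u)^{\kappa}$ because $|s|/u\ge 1$ forces $(|s|/u)^n\le(|s|/u)^\kappa$ for every $n\le\kappa$. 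The degenerate case $\kappa=0$ forces $m=0$ via the hypothesis $0\le m\le\min(\kappa,\ell)$, and then $R_0(u,s)=1-E_{0,0}=0$, so both branches hold trivially.

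The step I expect to require the most care is the Case 1 coefficient bound: neither the generating function \eqref{EDef} nor the combinatorial formula \eqref{Ealtexp} makes the absolute convergence of $\sum|E_{n,m}|$ immediately transparent, so the majorant $F(z)$ is the key device. Beyond that the lemma is a straightforward exercise in splitting cases and applying the two preceding estimates, so no further obstacles are anticipated.
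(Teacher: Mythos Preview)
Your proposal is correct and follows essentially the same route as the paper: tail of the power series \eqref{EGseries} when $|s|<u$, and the triangle inequality with Lemma~\ref{G_inequality} plus $(1+\log x)\le x$ when $|s|>u$. The only difference is cosmetic: for the absolute summability of the $E_{n,m}$, the paper invokes the multinomial formula \eqref{Ealtexp} to get $|E_{n,m}|\le m^{n}/n!$ and then sums, whereas you use the majorant $F(z)=\sum_{n\ge1}z^{n}/(n\cdot n!)$ and bound $\sum_n|E_{n,m}|\le F(1)^m$; both devices do the same job.
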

\begin{proof}
Since $R_m(u,s)=0$ when $m=0$, assume $m\ge1$. If $\left|s\right|<u$, then using \eqref{EGseries} and \eqref{E:Rdef},
\begin{equation}\label{Rseries}
R_{m}(u,s)=\sum_{n=\kappa+1}^{\infty}E_{n,m}\left(\frac{s}{u}\right)^{n}.
\end{equation}
We require a bound on $E_{n,m}$. From \eqref{Ealtexp}, it follows that for $m>1$,
\begin{equation}\label{Enm_bound}
  |n!E_{n,m}|\le\sum_{\substack{n_1,\ldots,n_m\ge1\\n_1+\cdots+n_m=n}}\binom{n}{n_1,\ldots,n_m}<m^n.
\end{equation}
This bound also applies to the case $m=1$, in which 
\begin{equation*}
  E_{n,1}=
  \begin{cases}
    \frac{(-1)^{n+1}}{n\ n!},& \text{if $n\ge1$,}\\
    0,& \text{if $n=0$.}
  \end{cases}  
\end{equation*}
Thus, we have that
\begin{equation*}
  |R_m(u,s)|\ll\frac{|s|^{\kappa+1}}{u^{\kappa+1}}\sum_{n=\kappa+1}^{\infty}|E_{n,m}|\ll\frac{|s|^{\kappa+1}}{u^{\kappa+1}}\sum_{n=\kappa+1}^{\infty}\frac{m^n}{n!}\ll_{\kappa,\ell}\frac{|s|^{\kappa+1}}{u^{\kappa+1}}.
\end{equation*}
On the other hand, if $\left|s\right|>u$, then using Lemma \ref{G_inequality} and \eqref{Enm_bound}, we have
\begin{equation*}
\left|R_{m}(u,s)\right|\ll|G(u,s)|^m+\frac{|s|^{\kappa}}{u^{\kappa}}\sum_{n=m}^{\kappa}|E_{n,m}|\ll_{\kappa,\ell} \left(1+\log\frac{|s|}{u}\right)^m+\frac{|s|^{\kappa}}{u^{\kappa}}\ll_{\kappa,\ell}\frac{|s|^{\kappa}}{u^{\kappa}}.
\end{equation*}
Here we have used the inequality $1+\log x\le x$ for $x\ge1$, and that $m\le\kappa$.
\end{proof}
\begin{lemma}\label{L:R'_ineq}
Suppose that $0\le m\le\min(\kappa,\ell)$, $\Re s=1$, and $u\ge 1$. Then we have
\begin{equation*}
\left|\frac{\partial}{\partial s}R_{m}(u,s)\right|\ll_{\kappa,\ell}
\begin{cases}
\displaystyle \frac{\left|s\right|^{\kappa}}{u^{\kappa+1}},& \text{if $\left|s\right|<u$,}\\
\displaystyle \frac{\left|s\right|^{\kappa-1}}{u^{\kappa}},& \text{if $\left|s\right|>u$.}
\end{cases}
\end{equation*}
\end{lemma}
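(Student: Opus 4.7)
The plan is to handle the two regimes $|s|<u$ and $|s|>u$ separately, mirroring the structure of the proof of Lemma~\ref{L:R_ineq}. Since $R_0\equiv 0$, we may assume $m\ge 1$ throughout.

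For $|s|<u$, I would differentiate the power series \eqref{Rseries} term by term, which is justified since $\Ein$ is entire, so that
\[
\frac{\partial}{\partial s}R_m(u,s)=\sum_{n=\kappa+1}^{\infty} n\,E_{n,m}\frac{s^{n-1}}{u^n}.
\]
Applying the bound $n!|E_{n,m}|<m^n$ from \eqref{Enm_bound} gives $n|E_{n,m}|<m^n/(n-1)!$, and factoring $|s|^\kappa/u^{\kappa+1}$ out of the sum yields
\[
\left|\frac{\partial}{\partial s}R_m(u,s)\right|\ll \frac{|s|^\kappa}{u^{\kappa+1}}\sum_{n=\kappa+1}^{\infty}\frac{m^n}{(n-1)!}\left(\frac{|s|}{u}\right)^{n-\kappa-1}\ll_{\kappa,\ell}\frac{|s|^\kappa}{u^{\kappa+1}},
\]
since $|s|/u<1$ and the remaining series is dominated by $me^m$, which depends only on $m\le\ell$.

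For $|s|>u$ the above expansion gives no useful estimate, so I would instead differentiate $R_m$ from its closed-form definition \eqref{E:Rdef}, obtaining
\[
\frac{\partial}{\partial s}R_m(u,s)=mG(u,s)^{m-1}\frac{\partial}{\partial s}G(u,s)-\sum_{n=m}^{\kappa}n\,E_{n,m}\frac{s^{n-1}}{u^n}.
\]
For the first term, Lemmas~\ref{G_inequality} and \ref{G'_inequality} supply $|G(u,s)|\ll 1+\log(|s|/u)$ and $|\partial G/\partial s|\ll 1/|s|$, and using the inequality $1+\log x\le x$ for $x\ge 1$ (exactly as in Lemma~\ref{L:R_ineq}) the product is $\ll_m |s|^{m-2}/u^{m-1}$. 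Since $m\le\kappa$ and $|s|\ge u$, the factor $(|s|/u)^{\kappa-m+1}\ge 1$ promotes this to $\le|s|^{\kappa-1}/u^\kappa$. For the polynomial term, the summand at $n=\kappa$ dominates for $|s|>u$, so the entire sum is $\ll_{\kappa,\ell}|s|^{\kappa-1}/u^\kappa$.

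No real obstacle arises; the only matter of care is the boundary case $m=1$, where $G^{m-1}=1$ and we are left with $|\partial G/\partial s|\ll 1/|s|$, which still satisfies $1/|s|\le|s|^{\kappa-1}/u^\kappa$ since $\kappa\ge m=1$ and $|s|\ge u$. Combining the two regimes yields the stated bound.
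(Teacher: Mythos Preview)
Your proposal is correct and follows essentially the same route as the paper's proof: differentiate the tail series \eqref{Rseries} for $|s|<u$, differentiate the closed form \eqref{E:Rdef} for $|s|>u$, and in the latter case bound the $G^{m-1}\partial_s G$ term via Lemmas~\ref{G_inequality}--\ref{G'_inequality} together with $1+\log x\le x$, while the finite polynomial sum is controlled by its top term $n=\kappa$. Your explicit check of the boundary case $m=1$ is a harmless addition that the paper leaves implicit.
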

\begin{proof}
Since $\frac{\partial}{\partial s}R_m(u,s)=0$ when $m=0$, assume $m\ge1$. Now, suppose that $\left|s\right|<u$. Using \eqref{Rseries},
\begin{align*}
\frac{\partial}{\partial s}R_{m}(u,s)&=\sum_{n=\kappa+1}^{\infty}n E_{n,m}\left(\frac{s}{u}\right)^{n-1}\left(\frac{1}{u}\right).
\end{align*}
Furthermore, applying our bound in \eqref{Enm_bound} yields
\begin{align*}
  \left|\frac{\partial}{\partial s}R_m(u,s)\right|&\ll\frac{|s|^{\kappa}}{u^{\kappa+1}}\sum_{n=\kappa+1}^{\infty} |nE_{n,m}|\ll\frac{|s|^{\kappa}}{u^{\kappa+1}}\sum_{n=\kappa+1}^{\infty}\frac{m^{n}}{(n-1)!}\ll_{\kappa,\ell} \frac{|s|^{\kappa}}{u^{\kappa+1}}.
\end{align*}
On the other hand, if $\left|s\right|>u$, then by \eqref{E:Rdef},
\begin{align*}
  \frac{\partial}{\partial s}R_m(u,s)&=m\frac{\partial}{\partial s}G(u,s)G(u,s)^{m-1}-\sum_{n=m}^{\kappa}n E_{n,m}\left(\frac{s}{u}\right)^{n-1}\left(\frac{1}{u}\right).
\end{align*}
Finally, using Lemma \ref{G_inequality} and Lemma \ref{G'_inequality}, we may conclude that
\begin{align*}
\left|\frac{\partial}{\partial s}R_{m}(u,s)\right|&\ll_{\kappa,\ell} \left|\frac{\partial}{\partial s}G(u,s)\right||G(u,s)|^{m-1}+\sum_{n=m}^{\kappa}|nE_{n,m}|\left(\frac{|s|}{u}\right)^{n-1}\frac{1}{u}\\
 &\ll_{\kappa,\ell} \frac{1}{|s|}\left(\frac{|s|}{u}\right)^{m-1}+\frac{|s|^{\kappa-1}}{u^{\kappa}}\sum_{n=m}^{\kappa}|nE_{n,m}|\ll_{\kappa,\ell}\frac{|s|^{\kappa-1}}{u^{\kappa}}.
\end{align*}
\end{proof}
Next, we will also make use of Lemma \ref{L1_L2_Lemma} and \ref{L:loglemma} extensively in the course of proving Lemma \ref{Error1} and \ref{Error2}. Their proofs are recorded here for completeness.
\begin{lemma}\label{L1_L2_Lemma}
  If $\ell\ge0$, $u\ge1$, and 
  \begin{align*}
  \Gamma_1&:=\{s\in\mathbb{C}: \Re s=1, \Im s>0, 1\le |s|<u\},\\
  \Gamma_2&:=\{s\in\mathbb{C}: \Re s=1, \Im s>0, |s|>u\}, 
  \end{align*}
  then we have
  \begin{align}\label{L1int}
    \int_{\Gamma_1} \frac{|ds|}{|s|}\ll 1+\log u,
  \end{align}
  and,
  \begin{equation}\label{L2int}
    \int_{\Gamma_2} \frac{\left(1+\log |s|\right)^\ell}{|s|^{2}}|ds|\ll_{\ell} \frac{(1+\log u)^\ell}{u}.
  \end{equation}
\end{lemma}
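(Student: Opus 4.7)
The natural setup is to parametrize both contours by $s=1+it$ with $t>0$, so that $|ds|=dt$ and $|s|=\sqrt{1+t^{2}}$. Under this parametrization, the condition $1\le|s|<u$ becomes $0\le t<\sqrt{u^{2}-1}=:T$, while the condition $|s|>u$ becomes $t>T$. With this in hand, both bounds reduce to purely real one-variable integral estimates.

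For \eqref{L1int}, the integral collapses to
\[
\int_{0}^{T}\frac{dt}{\sqrt{1+t^{2}}}=\operatorname{arcsinh}(T)=\log\!\bigl(T+\sqrt{T^{2}+1}\bigr)=\log(T+u).
\]
Since $T\le u$, the right-hand side is at most $\log(2u)\ll 1+\log u$ for $u\ge 1$, which is exactly the claimed bound.

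For \eqref{L2int}, I would first replace $\sqrt{1+t^{2}}$ by something cleaner: the elementary inequalities $\sqrt{1+t^{2}}\le 1+t$ and $1+t^{2}\ge\tfrac12(1+t)^{2}$ reduce the integral to at most a constant times $\int_{T}^{\infty}(1+\log(1+t))^{\ell}(1+t)^{-2}\,dt$. The substitution $w=1+\log(1+t)$ (so $dw=dt/(1+t)$ and $dt/(1+t)^{2}=e^{1-w}\,dw$) transforms this cleanly into $e\int_{W}^{\infty}w^{\ell}e^{-w}\,dw$, where $W=1+\log(1+T)$. Repeated integration by parts gives the standard incomplete-gamma tail bound
\[
\int_{W}^{\infty}w^{\ell}e^{-w}\,dw=e^{-W}\sum_{k=0}^{\ell}\frac{\ell!}{(\ell-k)!}W^{\ell-k}\ll_{\ell}(1+W)^{\ell}e^{-W},
\]
and translating back yields $\ll_{\ell}(1+\log(1+T))^{\ell}/(1+T)$.

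The only remaining point—and the place where one must be a little careful—is showing that $(1+\log(1+T))^{\ell}/(1+T)\ll_{\ell}(1+\log u)^{\ell}/u$ uniformly for $u\ge 1$. For $u$ bounded away from $1$ (say $u\ge 2$) one has $T\asymp u$ and the comparison is trivial. For $u\in[1,2]$, $T$ can be as small as $0$, so the left side is merely bounded; but on this range $(1+\log u)^{\ell}/u$ is bounded below by a positive constant, so the inequality still holds. This boundary behaviour near $u=1$ is the only subtlety in an otherwise routine calculation.
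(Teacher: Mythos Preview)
Your proof is correct and follows essentially the same route as the paper: parametrize by $s=1+it$, replace $\sqrt{1+t^{2}}$ by quantities comparable to $1+t$, and for \eqref{L2int} integrate by parts (your incomplete-gamma substitution is just an explicit form of the paper's ``integration by parts'' step). The only cosmetic differences are that you evaluate the $\Gamma_1$ integral exactly as $\operatorname{arcsinh}(T)$ rather than bounding it, and you are more careful than the paper about the boundary behaviour near $u=1$.
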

\begin{proof}
  To prove \eqref{L1int} and \eqref{L2int}, we let $s=1+it$ and observe that
  \begin{equation*}
    \int_{\Gamma_1}\frac{|ds|}{|s|}\ll\int_{0}^{\sqrt{u^2-1}}\frac{dt}{\sqrt{1+t^2}}\ll\int_{0}^{u}\frac{dt}{1+t}\ll 1+\log u,
  \end{equation*}
  and, using integration by parts, that
  \begin{equation*}
    \int_{\Gamma_2}\frac{\left(1+\log |s|\right)^\ell}{|s|^{2}}|ds|\ll_{\ell}\int_{u-1}^{\infty}\frac{\left(1+\log(1+t)\right)^\ell}{(1+t)^{2}}dt\ll_{\ell}\frac{\left(1+\log u\right)^\ell}{u}.
  \end{equation*}
 \end{proof}
\begin{lemma}\label{L:loglemma}
  If $\Re s=1$ and $u\ge1$, then we have
  \begin{equation*}
  |\log u-\log s-\gamma|\ll
  \begin{cases}
    1+\log u,& \text{if $|s|\le u$,}\\
    1+\log |s|,& \text{if $|s|\ge u$.}
  \end{cases}
  \end{equation*}
\end{lemma}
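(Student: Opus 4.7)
The plan is to reduce everything to the triangle inequality after decomposing the complex logarithm into its real and imaginary parts. Since the constraint $\Re s = 1 > 0$ keeps $s$ inside the right half-plane, the principal branch of $\log s$ is well-behaved and we have
\[
\log s = \log|s| + i\arg s, \qquad |\arg s| \le \pi/2.
\]
Applying the triangle inequality to $\log u - \log s - \gamma$ therefore yields
\[
|\log u - \log s - \gamma| \le \bigl|\log u - \log|s|\bigr| + |\arg s| + \gamma \le \bigl|\log u - \log|s|\bigr| + \tfrac{\pi}{2} + \gamma.
\]

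Next I would split on the size of $|s|$. Since $\Re s = 1$ forces $|s| \ge 1$, and by hypothesis $u \ge 1$, both $\log|s|$ and $\log u$ are nonnegative. If $|s| \le u$ then $0 \le \log u - \log|s| \le \log u$, so the bound simplifies to $\log u + \pi/2 + \gamma \ll 1 + \log u$. If instead $|s| \ge u$ then $0 \le \log|s| - \log u \le \log|s|$, giving $\log|s| + \pi/2 + \gamma \ll 1 + \log|s|$. In either case the claimed inequality follows with an absolute implied constant.

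There is really no main obstacle here: the lemma is a direct consequence of the boundedness of the argument on the vertical line $\Re s = 1$ together with the monotonicity of $\log$ on $[1,\infty)$. The only minor point to mention is the use of the principal branch, which is unambiguously defined since $\Re s = 1$ keeps $s$ away from the negative real axis.
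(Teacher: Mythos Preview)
Your proof is correct and follows essentially the same approach as the paper: both use that $|\arg s|\le\pi/2$ on the line $\Re s=1$ and then apply the triangle inequality, bounding the remaining real part by whichever of $\log u$ or $\log|s|$ is larger. The only cosmetic difference is that the paper writes the cruder bound $\log u+\log|s|+\tfrac{\pi}{2}+\gamma$ directly and then splits, whereas you keep $|\log u-\log|s||$ before splitting; the argument is the same.
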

\begin{proof}
  Observe that since $\Re s=1$, we have $|\arg s|\le\frac{\pi}{2}$, and so
  \begin{equation*}
    |\log u-\log s-\gamma|\le \log u+\log |s|+ \frac{\pi}{2}+\gamma\ll \log u+\log |s|+ 1.
  \end{equation*}
  Both inequalities follow immediately.
\end{proof}
We are now ready to prove Lemma \ref{Error1} and Lemma \ref{Error2}.
\begin{lemma}\label{Error1}
If $\kappa<\ell$, $\kappa+1\le m\le\ell$, and $\Re s=1$, then for $u\ge\ell$,
\begin{equation*}
  \frac{u^{\kappa}}{2\pi i}\int_{(1)}e^{s}G(u,s)^m\left(\log u-\log s-\gamma\right)^{\ell-m}\frac{ds}{s^{\kappa+1}}\ll_{\kappa,\ell}\frac{(1+\log u)^{\ell}}{u}.
\end{equation*}
\end{lemma}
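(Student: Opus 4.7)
The integrand is $e^s U(s)$ with $U(s):=G(u,s)^m(\log u-\log s-\gamma)^{\ell-m}/s^{\kappa+1}$, and the obstacle is that $|e^s|=e$ on the line $\Re s=1$, so direct absolute-value estimates via Lemmas \ref{G_inequality}, \ref{L:loglemma}, and \ref{L1_L2_Lemma} yield only $u^{\kappa}\cdot(1+\log u)^\ell/u^{\kappa}=(1+\log u)^\ell$ on $\Gamma_2$, missing the desired factor of $1/u$. The plan is therefore to integrate by parts once, writing $e^s\,ds=d(e^s)$. Since $|U(s)|\ll_{\kappa,\ell}(1+\log|s|)^\ell/|s|^{\kappa+1}\to 0$ as $|\Im s|\to\infty$, the boundary terms at $\pm i\infty$ vanish and
\[
\int_{(1)}e^s U(s)\,ds=-\int_{(1)}e^s U'(s)\,ds.
\]

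Differentiating $U$ produces three natural pieces, corresponding to the three factors of $U$:
\begin{align*}
T_1 &= m\,G^{m-1}(\partial_s G)(\log u-\log s-\gamma)^{\ell-m}/s^{\kappa+1},\\
T_2 &= -(\ell-m)\,G^m(\log u-\log s-\gamma)^{\ell-m-1}/s^{\kappa+2},\\
T_3 &= -(\kappa+1)\,G^m(\log u-\log s-\gamma)^{\ell-m}/s^{\kappa+2}.
\end{align*}
Each $T_j$ now carries an extra decaying factor: either $\partial_s G$, which is of order $1/u$ on $\Gamma_1$ and $1/|s|$ on $\Gamma_2$ by Lemma \ref{G'_inequality}, or an extra $1/s$. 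This extra decay supplies precisely the missing factor of $1/u$.

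To conclude, split the contour into $\Gamma_1\cup\overline{\Gamma_1}$ and $\Gamma_2\cup\overline{\Gamma_2}$ according to whether $|s|<u$ or $|s|>u$. On $\Gamma_1$, apply $|G|^m\ll(|s|/u)^m$ (Lemma \ref{G_inequality}) and $|\log u-\log s-\gamma|\ll 1+\log u$ (Lemma \ref{L:loglemma}); since $m\ge\kappa+1$, each term collapses to an integral of the form $u^{-\kappa-1}$ times a bounded power of $|s|$, controlled by \eqref{L1int}, giving a total of order $u^{-\kappa-1}(1+\log u)^\ell$. On $\Gamma_2$, the estimates $|G|^m\ll(1+\log|s|)^m$ and $|\log u-\log s-\gamma|\ll 1+\log|s|$ force each $T_j$ into the shape $(1+\log|s|)^\ell/|s|^{\kappa+2}$, and \eqref{L2int} then yields $(1+\log u)^\ell/u^{\kappa+1}$. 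In both regions, multiplication by the outer factor $u^\kappa$ produces the target bound $(1+\log u)^\ell/u$.

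The main obstacle is bookkeeping across the three IBP terms and two contour regions, verifying that each of the six resulting integrals meets the target. The tightest case is $m=\kappa+1$ on $\Gamma_1$ for $T_2$ and $T_3$: there the decay $(|s|/u)^m$ from $G^m$ only barely cancels the denominator $|s|^{\kappa+2}$, and one picks up an extra logarithmic factor from $\int_{\Gamma_1}|ds|/|s|$. The hypothesis $m\ge\kappa+1$ is used in precisely this place to keep the power of $|s|$ in the integrand non-negative on $\Gamma_1$.
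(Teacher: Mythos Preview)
Your proposal is correct and follows essentially the same route as the paper: integrate by parts once against $e^s$, split the resulting three terms (your $T_1,T_2,T_3$ are the paper's $\mathcal{I}_2,\mathcal{I}_1,\mathcal{I}_3$ up to sign), and estimate each on $\Gamma_1$ and $\Gamma_2$ via Lemmas~\ref{G_inequality}--\ref{L:loglemma}. The only cosmetic difference is that the paper immediately weakens $|G|^m\ll(|s|/u)^m$ to $(|s|/u)^{\kappa+1}$ on $\Gamma_1$ (using $m\ge\kappa+1$, $|s|<u$) so that every case reduces to an application of \eqref{L1int}, whereas you carry the exponent $m$ and note that $m=\kappa+1$ is the tight case; both lead to the same bound.
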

\begin{proof}
First, we integrate by parts and write
\begin{align*}
  -\frac{u^{\kappa}}{2\pi i}\int_{(1)}e^{s}\frac{\partial}{\partial s}\left(\frac{G(u,s)^{m}}{s^{\kappa+1}}\left(\log u-\log s-\gamma\right)^{\ell-m}\right)ds=\mathcal{I}_{1}+\mathcal{I}_{2}+\mathcal{I}_{3},
\end{align*}
where
\begin{align*}
  \mathcal{I}_{1}&=\frac{u^{\kappa}}{2\pi i}\int_{(1)}e^{s}\frac{(\log u-\log s-\gamma)^{\ell-m-1}}{s}\left(\frac{(\ell-m)G(u,s)^{m}}{s^{\kappa+1}}\right)ds,\\
  \mathcal{I}_{2}&=\frac{u^{\kappa}}{2\pi i}\int_{(1)}e^{s}\frac{(\log u-\log s-\gamma)^{\ell-m}}{s}\left(\frac{-smG(u,s)^{m-1}\frac{\partial}{\partial s}G(u,s)}{s^{\kappa+1}}\right)ds,\\
  \mathcal{I}_{3}&=\frac{u^{\kappa}}{2\pi i}\int_{(1)}e^{s}\frac{(\log u-\log s-\gamma)^{\ell-m}}{s}\left(\frac{(\kappa+1)G(u,s)^{m}}{s^{\kappa+1}}\right)ds.
\end{align*}
Next, we divide the integrals into two parts, one in which $1\le|s|<u$, and the other in which $|s|>u$. We need only consider $\mathcal{I}_{1}$, since the other two integrals are handled similarly. For brevity, we set $G(u,s)=G$.\\
Now, observe that $\mathcal{I}_{1}$ vanishes if $m=\ell$, so we may assume here that $\kappa+1\le m\le\ell-1$. Using Lemmas \ref{G_inequality}, \ref{L1_L2_Lemma}, and \ref{L:loglemma} gives
\begin{align*}
  \mathcal{I}_{1}&\ll_{\kappa,\ell} u^{\kappa}\left(\int_{\Gamma_1}\frac{\left(1+\log u\right)^{\ell-m-1}}{|s|}\frac{|G|^m}{|s|^{\kappa+1}}|ds|+\int_{\Gamma_2}\frac{\left(1+\log|s|\right)^{\ell-m-1}}{|s|}\frac{|G|^m}{|s|^{\kappa+1}}|ds|\right)\\
  &\ll_{\kappa,\ell} u^{\kappa}\int_{\Gamma_1}\frac{\left(1+\log u\right)^{\ell-m-1}}{|s|^{\kappa+2}}\frac{|s|^{\kappa+1}}{u^{\kappa+1}}|ds|+u^{\kappa}\int_{\Gamma_2}\frac{\left(1+\log|s|\right)^{\ell-1}}{|s|^{\kappa+2}}|ds|\\
  &\ll_{\kappa,\ell} \frac{(1+\log u)^{\ell-m-1}}{u} \int_{\Gamma_1}\frac{|ds|}{|s|}+\int_{\Gamma_2}\frac{\left(1+\log|s|\right)^{\ell-1}}{|s|^{2}}|ds|\\
  &\ll_{\kappa,\ell} \frac{(1+\log u)^{\ell}}{u},
\end{align*}
where $\Gamma_1$ and $\Gamma_2$ are as in Lemma \ref{L1_L2_Lemma}. The estimation of $\mathcal{I}_{2}$ additionally requires the use of Lemma \ref{G'_inequality}.
\end{proof}
\begin{lemma}\label{Error2}
If $0\le m\le\min\left(\kappa,\ell\right)$, and $\Re s=1$, then for $u\ge\ell$,
\begin{equation*}
u^{\kappa}\frac{1}{2\pi i}\int_{(1)}e^{s}R_{m}(u,s)\left(\log u-\log s-\gamma\right)^{\ell-m}\frac{ds}{s^{\kappa+1}}\ll_{\kappa,\ell}\frac{(1+\log u)^{\ell}}{u}.
\end{equation*}
\end{lemma}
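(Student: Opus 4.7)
The plan is to mirror the integration-by-parts argument used to prove Lemma \ref{Error1}, with $G(u,s)^m$ replaced throughout by $R_m(u,s)$ and the $G$-estimates replaced by the $R_m$-estimates in Lemmas \ref{L:R_ineq} and \ref{L:R'_ineq}. The case $m=0$ is trivial since $R_0(u,s)\equiv0$, so I would assume $m\ge1$ from the outset.

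First I would integrate by parts in $s$ along the line $\Re s=1$. The boundary contribution at $s=1\pm i\infty$ vanishes, since Lemma \ref{L:R_ineq} and Lemma \ref{L:loglemma} show that $|e^s R_m(u,s)(\log u-\log s-\gamma)^{\ell-m}/s^{\kappa+1}|$ is dominated by a constant multiple of $(\log|s|)^{\ell-m}/(u^\kappa|s|)$, which tends to zero. Distributing the derivative of $R_m(u,s)s^{-\kappa-1}(\log u-\log s-\gamma)^{\ell-m}$ over its factors decomposes the integral into three pieces $\mathcal{J}_1,\mathcal{J}_2,\mathcal{J}_3$, corresponding respectively to differentiation of $R_m$, of the log factor, and of $s^{-\kappa-1}$; the middle piece disappears when $m=\ell$.

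Next, I would split each $\mathcal{J}_i$ over the contours $\Gamma_1$ and $\Gamma_2$ of Lemma \ref{L1_L2_Lemma} (by symmetry the lower half needs no separate treatment). On $\Gamma_1$ I insert the bounds $|R_m|\ll|s|^{\kappa+1}/u^{\kappa+1}$ and $|\partial_s R_m|\ll|s|^\kappa/u^{\kappa+1}$, together with $|\log u-\log s-\gamma|\ll 1+\log u$; in every case the prefactor $u^\kappa$ is absorbed and the integrand reduces to a multiple of $(1+\log u)^{a}/|s|$ with $a\le\ell-m$, so \eqref{L1int} produces a bound of size $(1+\log u)^{\ell-m+1}/u$. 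On $\Gamma_2$ I use $|R_m|\ll|s|^\kappa/u^\kappa$ and $|\partial_s R_m|\ll|s|^{\kappa-1}/u^\kappa$, together with $|\log u-\log s-\gamma|\ll 1+\log|s|$; the integrand becomes a multiple of $(1+\log|s|)^a/|s|^2$ with $a\le\ell-m$, so \eqref{L2int} produces a bound of size $(1+\log u)^{\ell-m}/u$.

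The key design feature is that Lemmas \ref{L:R_ineq} and \ref{L:R'_ineq} have been calibrated so that the prefactor $u^\kappa$ is annihilated and the integrand inherits precisely the $1/|s|$ and $1/|s|^2$ decay required by \eqref{L1int} and \eqref{L2int}; the hypothesis $m\ge 1$ is then exactly what is needed to absorb the extra $1+\log u$ arising from \eqref{L1int} into the overall $(1+\log u)^\ell$. I do not anticipate any serious obstacle here: once the $R_m$-estimates are in hand, the argument is essentially bookkeeping. The point requiring the most care is just verifying that each of the six sub-estimates fits under $(1+\log u)^\ell/u$, which is where the constraint $m\ge1$ enters crucially for the $\Gamma_1$ contributions of $\mathcal{J}_1$ and $\mathcal{J}_3$.
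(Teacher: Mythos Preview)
Your proposal is correct and follows essentially the same approach as the paper: integrate by parts, split the resulting three pieces over $\Gamma_1$ and $\Gamma_2$, and apply Lemmas \ref{L:R_ineq}, \ref{L:R'_ineq}, \ref{L1_L2_Lemma}, and \ref{L:loglemma}. The only cosmetic difference is the ordering of the three pieces (the paper's $\mathcal{I}_1$ is the one coming from the log factor), and the paper leaves the vanishing of the boundary terms implicit whereas you spell it out.
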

\begin{proof}
We will proceed as in Lemma \ref{Error1}, integrating by parts and writing
\begin{align*}
-\frac{u^{\kappa}}{2\pi i}\int_{(1)}e^{s}\frac{\partial}{\partial s}\left(R_{m}(u,s)\left(\log u-\log s-\gamma\right)^{\ell-m}\frac{1}{s^{\kappa+1}}\right)ds=\mathcal{I}_{1}+\mathcal{I}_{2}+\mathcal{I}_{3},
\end{align*}
where
\begin{align*}
  \mathcal{I}_{1}&=\frac{u^{\kappa}}{2\pi i}\int_{(1)}e^{s}\frac{\left(\log u-\log s-\gamma\right)^{\ell-m-1}}{s}\left(\frac{(\ell-m)R_{m}(u,s)}{s^{\kappa+1}}\right)ds\\
  \mathcal{I}_{2}&=\frac{u^{\kappa}}{2\pi i}\int_{(1)}e^{s}\frac{\left(\log u-\log s-\gamma\right)^{\ell-m}}{s}\left(\frac{-s \frac{\partial}{\partial s}R_{m}(u,s)}{s^{\kappa+1}}\right)ds\\
  \mathcal{I}_{3}&=\frac{u^{\kappa}}{2\pi i}\int_{(1)}e^{s}\frac{\left(\log u-\log s-\gamma\right)^{\ell-m}}{s}\left(\frac{(\kappa+1)R_{m}(u,s)}{s^{\kappa+1}}\right)ds.
\end{align*}
We again divide the integrals into two parts, one in which $1\le|s|<u$, and the other in which $|s|>u$. We consider only $\mathcal{I}_{1}$, since the other two integrals are handled similarly. For brevity, we set $R_m(u,s)=R$.\\
Assume that $m\ge1$ since $R_{m}(u,s)=0$ if $m=0$. Also, $\mathcal{I}_{1}$ vanishes if $m=\ell$, so we may assume that $1\le m\le\min\left(\kappa,\ell-1\right)$. Using Lemmas \ref{L:R_ineq}, \ref{L1_L2_Lemma}, and \ref{L:loglemma} gives
\begin{align*}
  \mathcal{I}_{1}&\ll_{\kappa,\ell} u^{\kappa}\left(\int_{\Gamma_1}\frac{\left(1+\log u\right)^{\ell-m-1}}{|s|}\frac{|R|}{|s|^{\kappa+1}}|ds|+\int_{\Gamma_2}\frac{\left(1+\log|s|\right)^{\ell-m-1}}{|s|}\frac{|R|}{|s|^{\kappa+1}}|ds|\right)\\
  &\ll_{\kappa,\ell} u^{\kappa}\int_{\Gamma_1}\frac{(1+\log u)^{\ell-m-1}}{|s|^{\kappa+2}}\frac{|s|^{\kappa+1}}{u^{\kappa+1}}|ds|+u^{\kappa}\int_{\Gamma_2}\frac{\left(1+\log|s|\right)^{\ell-m-1}}{|s|^{\kappa+2}}\frac{|s|^{\kappa}}{u^{\kappa}}|ds|\\
  &\ll_{\kappa,\ell} \frac{(1+\log u)^{\ell-m-1}}{u}\int_{\Gamma_1}\frac{|ds|}{|s|}+\int_{\Gamma_2}\frac{(1+\log|s|)^{\ell-m-1}}{|s|^{2}}|ds|\\
  &\ll_{\kappa,\ell} \frac{(1+\log u)^{\ell}}{u},
\end{align*}
where $\Gamma_1$ and $\Gamma_2$ are as in Lemma \ref{L1_L2_Lemma}. The estimation of $\mathcal{I}_{2}$ additionally requires the use of Lemma \ref{L:R'_ineq}.
\end{proof}
\section{Conclusion}
It is possible to generalize Theorem \ref{T_1} to all real $\kappa>-1$, at the expense of a slightly worse error term. In addition, the expansion is relevant to a number of functions satisfying the differential-delay equation $(u^{a}p(u))'=-bu^{a-1}p(u-1)$, including the Ankeny-Onishi-Selberg function, $j_{\kappa}(u)=\sigma_{\kappa}(2u)$, featured in the Selberg sieve. Although interest has been expressed in the arithmetic nature of the coefficients of the expansion, it may also be useful for computational purposes, given the great deal of uniformity in $u$.
\section*{Acknowledgement}
The author would like to thank the Mathematics Research Communities program for the opportunity to participate in the workshop on the Pretentious View of Analytic Number Theory at Snowbird, Utah, which was an inspiration for the preparation of the present paper.
\bibliographystyle{abbrv}
\bibliography{\jobname} 
\end{document}